\theoremstyle{plain}
   \newtheorem{teo}{Theorem}
   \newtheorem{coro}[teo]{Corollary}
   \newtheorem{lema}[teo]{Lemma}
\theoremstyle{definition}
\theoremstyle{remark}
 \newtheorem{obs}{Remark}
\numberwithin{equation}{section}
\numberwithin{teo}{section}
\definecolor{zzttqq}{rgb}{0.6,0.2,0.}
\definecolor{qqzzqq}{rgb}{0.,0.6,0.}
\definecolor{aquamarine}{rgb}{0.5, 1.0, 0.83}
\definecolor{blizzardblue}{rgb}{0.67, 0.9, 0.93}
\definecolor{blush}{rgb}{0.87, 0.36, 0.51}
\definecolor{celestialblue}{rgb}{0.29, 0.59, 0.82}
\definecolor{chocolate(web)}{rgb}{0.82, 0.41, 0.12}
\begin{document}
	
\title[Two-weighted estimates of $I_{\gamma,m}$]{Two-weighted estimates of the multilinear fractional integral operator between weighted Lebesgue and Lipschitz spaces with optimal parameters}
	
	
	\author[F. Berra]{Fabio Berra}
	\address{CONICET and Departamento de Matem\'{a}tica (FIQ-UNL),  Santa Fe, Argentina.}
	\email{fberra@santafe-conicet.gov.ar}
	
	\author[G. Pradolini]{Gladis Pradolini}
	\address{CONICET and Departamento de Matem\'{a}tica (FIQ-UNL),  Santa Fe, Argentina.}
	\email{gladis.pradolini@gmail.com}
%
	\author[W. Ramos]{Wilfredo Ramos}
	\address{CONICET and Departamento de Matem\'{a}tica (FaCENA-UNNE),  Corrientes, Argentina.}
	\email{oderfliw769@gmail.com}
	
	\thanks{The author were supported by CONICET, UNL and UNNE.}
	
	\subjclass[2010]{26A33, 42B25}
	
	\keywords{Multilinear fractional operator, Lipschitz spaces, weights}
	
	\begin{abstract}
			Given an $m$-tuple of weights $\vec{v}=(v_1,\dots,v_m)$, we characterize the classes of pairs $(w,\vec{v})$ involved with the boundedness properties of the multilinear fractional integral operator from $\prod_{i=1}^mL^{p_i}\left(v_i^{p_i}\right)$ into suitable Lipschitz spaces associated to a parameter $\delta$, $\mathcal{L}_w(\delta)$. Our results generalize some previous estimates not only for the linear case but also for the unweighted problem in the multilinear context. We emphasize the study related to the range of the parameters involved with the problem described above, which is optimal in the sense that they become trivial outside of the region obtained.
		 We also exhibit nontrivial examples of pairs of weights in this region.    
	\end{abstract}

	\maketitle
	
	\section*{}
	
	\medskip
	\medskip
	
	\vspace*{-1.5cm}
	\hrule
	\vspace*{0.1cm}
	\hrule
	\vspace*{0.5cm}
	\begin{center}
		\textit{This article is dedicated to Professor Eleonor ``Pola'' Harboure, beloved colleague whose vast knowledge and human kindness have always been a guidance to us.}
	\end{center}
	
	\vspace*{0.5cm}\hrule\vspace*{0.1cm}\hrule\vspace*{0.75cm}
		\medskip
	\medskip	
	
	\section{Introduction}\label{seccion: introduccion}
	
	In 1972 B. Muckenhoupt characterized the nonnegative functions $w$ for which the classical Hardy-Littlewood maximal operator $M$ is bounded in $L^p(w)$, for $1<p<\infty$ (see \cite{Muck72}). More precisely, the author proved that $M:L^p(w)\hookrightarrow L^p(w)$ if and only if $w\in A_p$, that is $w$ satisfies the inequality
	\[\left(\frac{1}{|Q|}\int_Q w\right)\left(\frac{1}{|Q|}\int_Q w^{1-p'}\right)^{p-1}\leq C\]
	for every cube $Q$.
	These classes became very important for many estimates in Harmonic Analysis and were further studied by many authors.
	
	Later on, in \cite{Muckenhoupt-Wheeden74}, B. Muckenhoupt and R. Wheeden introduced a variant of these sets of functions, the $A_{p,q}$ classes, given by the collection of weights $w$ such that
	\[\left(\frac{1}{|Q|}\int_Q w^q\right)^{1/q}\left(\frac{1}{|Q|}\int_Q w^{-p'}\right)^{1/p'}\leq C,\]
	for every cube $Q$, where $1<p,q<\infty$. These classes played an important role on the boundedness properties of the fractional maximal operator $M_\gamma$, $0<\gamma<n$ and the fractional integral operator $I_\gamma$ given by the expression 
	\[I_\gamma f(x)=\int_{\mathbb{R}^n}\frac{f(y)}{|x-y|^{n-\gamma}}\,dy,\]
	whenever the integral is finite. It was proved in \cite{Muckenhoupt-Wheeden74}  that if $1<p<n/\gamma$ and $1/q=1/p-\gamma/n$, then this operator maps $L^p(w^p)$ into $L^q(w^q)$  if and only if $w\in A_{p,q}$. For the endpoint case $p=n/\gamma$ it was also shown that the operator $I_\gamma$ maps $L^{n/\gamma}(w^{n/\gamma})$ into a weighted version of the bounded mean oscillation spaces $\mathrm{BMO}$ if and only if $w^{-n/(n-\gamma)}\in A_1$. Although the $A_{p,q}$ classes above are a variant of $A_p$, they are intimately related with them. It is well-known that $w\in A_{p,q}$ is equivalent either to  $w^q\in A_{1+q/p'}$ or $w^{-p'}\in A_{1+p'/q}$, (see \cite{Muckenhoupt-Wheeden74}). 
	
	Later on, in \cite{Pradolini01} the author proved that for $n/\gamma\leq p<n/(\gamma-1)^+$ and $\delta=\gamma-n/p$ the operator $I_\gamma$ maps $L^p(w^p)$ into suitable weighted Lipschitz spaces related to the parameter $\delta$. These spaces are a generalization of those introduced in \cite{Muckenhoupt-Wheeden74} which correspond to $\delta=0$. A two-weighted problem it was also studied, giving the optimal parameters for which the associated classes of weights are nontrivial.
	
	In \cite{HSV} E. Harboure, O. Salinas and B. Viviani introduced a newfangle class of weighted Lipschitz spaces wider than those considered in \cite{Pradolini01}. Concretely, they defined the class $\mathcal{L}_w(\delta)$ as the collection of locally integrable functions $f$ such that
	\begin{equation}\label{eq: definicion clase Lipschitz w}
		\sup_{B\subset \mathbb{R}^n}\frac{1}{w^{-1}(B)|B|^{\delta/n}}\int_B|f(x)-f_B|\,dx<\infty.
	\end{equation}  
They characterized the weights involved with the continuity properties of $I_{\gamma}$ acting between $L^{p}(w)$ into $\mathcal{L}_w(\delta)$ for $1<p<n/(\gamma-1)^+$ and $\delta=\gamma-n/p$. The class of weights turned out wider than the corresponding class considered in \cite{Pradolini01}, being the same under certain additional assumptions on the weight. Inspired in that work, a two-weighted problem was also studied in \cite{Prado01cal}. 

	Given $m\in\mathbb{N}$ and $0<\gamma<mn$ the multilinear fractional integral operator of order $m$, $I_{\gamma,m}$, is defined as follows
	\[I_{\gamma,m} \vec{f}(x)=\int_{(\mathbb{R}^n)^m} \frac{\prod_{i=1}^m f_i(y_i)}{(\sum_{i=1}^m|x-y_i|)^{mn-\gamma}}\,d\vec{y},\]
	where $\vec{f}=(f_1,f_2,\dots, f_m)$ and $\vec{y}=(y_1,y_2,\dots, y_m)$, provided the integral is finite.
	
	The continuity properties of  $I_{\gamma,m}$ were studied for several authors. For example,  it was shown in \cite{Moen09} that if $0<\gamma<mn$ then
	$I_{\gamma,m}: \prod_{i=1}^m L^{p_i}\hookrightarrow L^q$, where $1/p=\sum_{i=1}^m1/p_i$ and $1/q=1/p-\gamma/n$. The author also considered weighted versions of these estimates, generalizing the results of \cite{Muckenhoupt-Wheeden74} to the multilinear context. On the other hand, in \cite{AHIV} unweighted estimates of $I_{\gamma,m}$ between $\prod_{i=1}^m L^{p_i}$ and  Lipschitz-$\delta$ spaces were given, with $0\leq \delta<1$ and $\delta=\gamma-n/p$. For other type of estimates involving multilinear version of the fractional integral operator see also \cite{Grafakos92}, \cite{GK01},  \cite{KS99} and \cite{Pradolini10}.

	Recently in \cite{BPR22} we studied the boundedness of $I_{\gamma,m}$ between $\prod_{i=1}^m L^{p_i}\left(v_i^{p_i}\right)$ into the space $\mathbb{L}_w(\delta)$ defined by the collection of locally integrable functions $f$ such that 
	\begin{equation}\label{eq: definicion clase Lipschitz norma inf}
	\sup_{B\subset \mathbb{R}^n}\frac{\|w\mathcal{X}_B\|_\infty}{|B|^{1+\delta/n}}\int_B|f(x)-f_B|\,dx<\infty,
	\end{equation} 
	characterizing the weights involved as those satisfying the condition $\mathbb{H}_m(\vec{p},\gamma,\delta)$ given by
	\begin{equation}\label{eq: clase Hbb(p,gamma,delta) - m}
	\frac{\|w\mathcal{X}_B\|_\infty}{|B|^{(\delta-1)/n}}\prod_{i=1}^m\left(\int_{\mathbb{R}^n} \frac{v_i^{-p_i'}(y)}{(|B|^{1/n}+|x_B-y|)^{(n-\gamma_i+1/m)p_i'}}\,dy\right)^{1/p_i'}\leq C.
	\end{equation} 
	
	The purpose of this article is to study the boundedness of the operator $I_{\gamma,m}$ between a product of weighted Lebesgue spaces into the Lipschitz space $\mathcal{L}_w(\delta)$ defined in (\ref{eq: definicion clase Lipschitz w}). Our result generalizes the linear case when $p>n/\gamma$. We do not only consider related weights, which is an adequate extension of the one-weight estimates in the linear case proved in \cite{HSV}, but also with independent weights exhibiting an extension of the corresponding problem given in \cite{Prado01cal} for $m=1$. We characterize the classes of weights for which the problem described above holds. We also show the optimal range of the parameters involved. The optimality is understood in the sense that the parameters describe certain region in which we can find concrete examples of weights belonging to the class, becoming trivial outside of it. The results obtained in this paper not only extend the results in  \cite{HSV} and \cite{Prado01cal} but also they generalize the unweighted multilinear results proved in \cite{AHIV} .
	
	We shall now introduce the classes of weights and the notation required in order to state our main results. 
	
	Along the manuscript the multilinear parameter will be denoted by $m\in \mathbb{N}$. Let $0<\gamma<mn$, $\delta\in \mathbb{R}$ and $\vec{p}=(p_1,p_2,\dots, p_m)$ be an $m$-tuple of exponents  where $1\le p_i \le \infty$ for $1\le i\le m$. We define $p$ such that $1/p=\sum_{i=1}^{m}1/p_i$. 
	
	We shall be dealing with a wider class of multilinear weights than those satisfying \eqref{eq: clase Hbb(p,gamma,delta) - m}  (see  \cite{BPR22}) and defined as follows. Given the weights $w$, $v_1,\dots, v_m$, if $\vec{v}=(v_1,v_2,\dots,v_m)$ 
	we say that a pair $(w,\vec{v})$ belongs to the class $\mathcal{H}_m(\vec{p},\gamma,\delta)$ if there exists a positive constant $C$ such that the inequality
	\begin{equation}
		\frac{|B|^{1+(1-\delta)/n}}{w^{-1}(B)}\prod_{i=1}^m\left(\int_{\mathbb{R}^n} \frac{v_i^{-p_i'}(y)}{(|B|^{1/n}+|x_B-y|)^{(n-\gamma_i+1/m)p_i'}}\,dy\right)^{1/p_i'}\leq C
	\end{equation} 
	holds for every ball $B=B(x_B, R)$, where $x_B$ denotes the center of $B$ and $\sum_{i=1}^m\gamma_i=\gamma$, with $0<\gamma_i<n$ for every $i$. The integral above is understood as usual when $p_i=1$, (see \S~\ref{section: preliminares} for further details).
	
	When $m=1$ the class given above was first introduced in \cite{Prado01cal} (for $w=v$ see also \cite{MW-75-76} for the case $\delta=0$ and \cite{HSV} for the one-weight case). In that paper the author showed  nontrivial weights when $\delta\leq\min\{1,\gamma-n/p\}$. A similar restriction, as we shall prove, appears in the multilinear context.
	
\begin{obs}\label{obs: Hbb contenida en Hcal}
	It is easy to check that $\mathbb{H}_m(\vec{p},\gamma,\delta)\subset \mathcal{H}_m(\vec{p},\gamma,\delta)$ and, if $w^{-1}\in A_1$, both classes coincide. The same statement is true for the classes $\mathbb{L}_w(\delta)$ and $\mathcal{L}_w(\delta)$.
\end{obs}

We are now in a position to state our main results.

	\begin{teo}\label{teo: teo principal - Hcal}
		Let $0<\gamma<mn$, $\delta\in\mathbb{R}$,  and $\vec{p}$ a vector of exponents that verifies $p>n/\gamma$. Let $(w,\vec{v})$ a pair such that $v_i^{-p_i'}\in \mathrm{RH}_{m}$, for $i\in\mathcal{I}_2=\{1\leq i\leq m: 1<p_i\leq \infty\}$. Then the following statements are equivalent:
		\begin{enumerate}[\rm(1)]
			\item \label{item: teo principal - Hcal item 1} The operator $I_{\gamma,m}$ is bounded from  $\prod_{i=1}^m L^{p_i}(v_i^{p_i})$ to $\mathcal{L}_{w}(\delta)$;
			\item \label{item: teo principal - Hcal item 2} The pair $(w,\vec{v})$ belongs to $\mathcal{H}_m(\vec{p},\gamma,\delta)$. 
		\end{enumerate}
	\end{teo}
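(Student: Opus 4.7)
My plan is to prove the two implications separately, with the sufficiency direction (2)$\Rightarrow$(1) carrying most of the work. Fix an arbitrary ball $B=B(x_B,R)$ and perform the standard local/global decomposition $f_i=f_i^0+f_i^\infty$ with $f_i^0=f_i\chi_{2B}$. Expanding $\prod_{i=1}^m(f_i^0+f_i^\infty)$ by multilinearity splits $I_{\gamma,m}\vec{f}$ into $2^m$ pieces indexed by the subset $S\subset\{1,\dots,m\}$ of ``global'' factors. Choosing the centering constant $c_B=I_{\gamma,m}(f_1^\infty,\dots,f_m^\infty)(x_B)$, it suffices to show
\[\frac{1}{w^{-1}(B)\,|B|^{\delta/n}}\int_B\bigl|I_{\gamma,m}\vec{f}(x)-c_B\bigr|\,dx\;\leq\;C\prod_{i=1}^m\|f_iv_i\|_{L^{p_i}},\]
with $C$ controlled by the $\mathcal{H}_m(\vec{p},\gamma,\delta)$ constant.

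For the all-global piece ($S=\{1,\dots,m\}$), the key is the smoothness of the kernel. Writing $K(z,\vec{y})=(\sum_i|z-y_i|)^{-(mn-\gamma)}$, a standard mean-value estimate yields, for $x\in B$ and $y_i\in(2B)^c$,
\[\bigl|K(x,\vec{y})-K(x_B,\vec{y})\bigr|\;\lesssim\;\frac{R}{\bigl(\sum_i|x_B-y_i|\bigr)^{mn-\gamma+1}}.\]
The elementary inequality $\prod_i a_i^{s_i}\leq(\sum_i a_i)^{\sum_i s_i}$, applied with $s_i=n-\gamma_i+1/m$ so that $\sum_i s_i=mn-\gamma+1$, factors the denominator as $\prod_i(|B|^{1/n}+|x_B-y_i|)^{n-\gamma_i+1/m}$, since these quantities are comparable on $(2B)^c$. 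A Hölder inequality in each variable $y_i$ with exponents $p_i,p_i'$, splitting $f_i=(f_iv_i)\cdot v_i^{-1}$, then reproduces exactly the product appearing in $\mathcal{H}_m$, while the spare factor $R=|B|^{1/n}$ combines with the factor $|B|/(w^{-1}(B)|B|^{\delta/n})$ from integration in $x$ to match the required prefactor $|B|^{1+(1-\delta)/n}/w^{-1}(B)$.

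For each mixed piece ($S\subsetneq\{1,\dots,m\}$) I do not use smoothness but rather the pointwise bound obtained from $\sum_j|x-y_j|\geq|x-y_i|$, integrating in $x\in B$ against the local singularities $|x-y_i|^{-(n-\gamma_i)}$ for $i\notin S$ to generate powers of $R$, and then applying Hölder in the $y_i$ variables. The hypothesis $v_i^{-p_i'}\in\mathrm{RH}_m$ is used at this point to dominate the resulting local averages $\int_{2B}v_i^{-p_i'}$ by the matching tail pieces of the $\mathcal{H}_m$ condition; the exponent $m$ in the reverse Hölder class is natural since the estimate involves a product over all $m$ indices. The case $S=\emptyset$ (all-local) generates no tail and is controlled entirely by the $\mathrm{RH}_m$-driven local estimates.

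For the necessity direction (1)$\Rightarrow$(2) I test the boundedness inequality on functions of the form
\[f_i(y)\;=\;v_i^{-p_i'}(y)\,\chi_{(2B)^c}(y)\,(|B|^{1/n}+|x_B-y|)^{-(n-\gamma_i+1/m)(p_i'-1)},\]
with the usual modification when $p_i=1$; a direct computation shows that $\|f_iv_i\|_{L^{p_i}}$ equals the $i$-th factor in $\mathcal{H}_m$ raised to the complementary power. It remains to lower-bound $\int_B|I_{\gamma,m}\vec{f}(x)-(I_{\gamma,m}\vec{f})_B|\,dx$ by the appropriate multiple of $|B|^{1+(1-\delta)/n}/w^{-1}(B)$, achieved by exhibiting two well-chosen points $x_1,x_2\in B$ at which the kernel difference has a controllable sign, giving a matching two-sided smoothness estimate. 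I anticipate the main obstacle to be the sufficiency direction, and specifically the mixed terms: coordinating the pointwise kernel decomposition, the $x$-integration, the Hölder application, and the reverse Hölder step so that all $2^m$ pieces collapse uniformly into $\mathcal{H}_m(\vec{p},\gamma,\delta)$ with the same exponent $n-\gamma_i+1/m$ that arises naturally from the smoothness-driven main term.
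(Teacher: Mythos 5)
Your sufficiency argument is essentially the paper's route (local/global splitting around $2B$, kernel smoothness for the far part, H\"{o}lder against $v_i^{-1}$, and the $\mathcal{H}_m$ condition), with only minor deviations; two caveats there are that $c_B=I_{\gamma,m}(f_1^\infty,\dots,f_m^\infty)(x_B)$ need not be finite (the paper works with the modified operator $J_{\gamma,m}$ and the absolutely convergent constant $a_B$ precisely for this reason), and that the role of $v_i^{-p_i'}\in\mathrm{RH}_m$ is not to ``dominate local averages by tail pieces'' but to control the genuinely local singular integrals, where the kernel must be shared among the $m$ variables and one returns from $m$-th power averages of $v_i^{-p_i'}$ to plain averages. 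The real problems are in your necessity direction.

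First, the sign issue: for two fixed points $x_1,x_2\in B$ the sign of $K(x_1,\vec y)-K(x_2,\vec y)$ changes as $\vec y$ ranges over the support of your test functions (already for $m=1$ it flips across the bisector hyperplane of $x_1,x_2$), so the difference of the potentials can be small by cancellation even when each factor of $\mathcal{H}_m$ is large; moreover an $L^1$ oscillation $\int_B|T\vec f-(T\vec f)_B|$ cannot be bounded below by values at two points. The paper's proof hinges on Lemma~\ref{lema: diferencia de nucleos positiva}: a uniformly \emph{positive} lower bound for the kernel difference, valid for $x\in C_1$, $z\in C_2$ with $|C_i|\approx|B|$ (Remark~\ref{obs: medida de conjuntos C como B}) and all $y_j$ in a fixed quadrant $A$ issuing from $x_B$, followed by a reassembly over quadrants; nothing in your sketch supplies this device. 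Second, cutting the test functions off to $(2B)^c$ can at best produce the global condition \eqref{eq: condicion global 2}, not the full condition \eqref{eq: clase Hcal(p,gamma,delta) - m}: passing from the global condition back to $\mathcal{H}_m(\vec p,\gamma,\delta)$ requires $v_i^{-1}\in\mathrm{RH}_\infty$ for $i\in\mathcal{I}_1$ and doubling of $v_i^{-p_i'}$ for $i\in\mathcal{I}_2$ (Lemma~\ref{lema: equivalencia con local y global}), which are not hypotheses of the theorem, and $\mathrm{RH}_m$ alone does not give doubling. The paper instead tests with functions supported on truncations $A_k\cap V_k^i$ of the whole quadrant, so the regularized kernel $(|B|^{1/n}+|x_B-\cdot|)$ appears and monotone convergence recovers the full-space integrals; relatedly, your untruncated $f_i$ may have $\|f_iv_i\|_{p_i}=\infty$, making the test vacuous, and the case $p_i=1$ is not a routine ``modification'': it needs the separate duality argument with the functionals $F_i^k$ and the sets $V_k^i=\{v_i^{-1}\le k\}$ used in the paper.
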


Observe that a reverse Hölder condition for the weights $v_i$ is required for our theorem to hold. Although this seems to be a restriction, it does trivially hold when we consider $m=1$, as expected. A condition of this type was also required for the class $\mathbb{H}_m(\vec{p},\gamma,\delta)$ in \cite{BPR22}.

We also notice that whilst there is no restriction on $\delta$ in the previous theorem, they arise as a consequence of the nature of the corresponding weights. The following theorem establishes the range of parameters involved in the class $\mathcal{H}_m(\vec{p},\gamma,\delta)$ where the weights are trivial, that is, $v_i=\infty$ a.e. for some $i$ or $w=0$ a.e. 
	
\begin{teo}\label{teo: no-ejemplos Hcal} Let $0<\gamma<mn$, $\delta\in\mathbb{R}$,  and $\vec{p}$ a vector of exponents. The following statements hold:
	\begin{enumerate}[\rm(a)]
		\item\label{item: teo no-ejemplos Hcal - item a} If $\delta>1$ or $\delta>\gamma-n/p$ then condition $\mathcal{H}_m(\vec{p},\gamma,\delta)$ is satisfied if and only if $v_i=\infty$ a.e. for some $1\le i\le m$.
		\item\label{item: teo no-ejemplos Hcal - item b} The same conclusion holds if $\delta=\gamma-n/p=1$.
		\end{enumerate}
\end{teo}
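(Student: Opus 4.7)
The natural approach is a Lebesgue-differentiation/scaling argument applied directly to the condition $\mathcal{H}_m(\vec p,\gamma,\delta)$ at a typical point. For a ball $B=B(x_0,R)$, each integral factor admits the crude lower bound obtained by restricting to $B$,
\[
\int_{\mathbb R^n}\frac{v_i^{-p_i'}(y)}{(|B|^{1/n}+|x_B-y|)^{(n-\gamma_i+1/m)p_i'}}\,dy \ \ge\ c\,R^{-(n-\gamma_i+1/m)p_i'}\int_B v_i^{-p_i'}.
\]
Choosing $x_0$ to be a common Lebesgue point of $w^{-1}$ and of each $v_i^{-p_i'}$, the differentiation theorem yields $\int_B v_i^{-p_i'}\sim|B|\,v_i^{-p_i'}(x_0)$ and $w^{-1}(B)\sim|B|\,w^{-1}(x_0)$ as $R\to 0^+$. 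Using $|B|^{1+(1-\delta)/n}=cR^{n+1-\delta}$ together with the exponent bookkeeping $\sum_i\bigl(n/p_i'-(n-\gamma_i+1/m)\bigr)=\gamma-n/p-1$, the left-hand side of $\mathcal{H}_m$ on $B$ is bounded below by
\[
c\,R^{\gamma-n/p-\delta}\,\frac{\prod_i v_i^{-1}(x_0)}{w^{-1}(x_0)}.
\]

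For part (a) under $\delta>\gamma-n/p$ the scaling exponent $\gamma-n/p-\delta$ is strictly negative, so letting $R\to 0^+$ forces $\prod_i v_i^{-1}(x_0)/w^{-1}(x_0)=0$ at every common Lebesgue point, i.e.\ at almost every $x_0$ either $v_i(x_0)=\infty$ for some $i$ or $w(x_0)=0$. A measurable-partition argument, exploiting the translation invariance of the dichotomy, upgrades this pointwise statement to the global triviality claimed in the theorem. For the remaining sub-case $\delta>1$ with $\delta\le\gamma-n/p$, which forces $\gamma>n/p+1=\sum_i(n/p_i+1/m)$, the pigeonhole applied to any admissible splitting $\sum_i\gamma_i=\gamma$ ($0<\gamma_i<n$) produces an index $i_0$ with $(n-\gamma_{i_0}+1/m)p_{i_0}'<n$, so the free kernel of $I_{i_0}$ is non-integrable at infinity; a dyadic annular lower bound on $I_{i_0}(B)$, combined with applying $\mathcal{H}_m$ to the family $\{2^kB\}_{k\ge 0}$ and sending $k\to\infty$, yields the same contradiction and forces $v_{i_0}=\infty$ a.e.

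Part (b), $\delta=\gamma-n/p=1$, is the borderline case: the polynomial exponent $\gamma-n/p-\delta$ vanishes, so the small-ball argument is neutral, and the pigeonhole gives the equality $(n-\gamma_{i_0}+1/m)p_{i_0}'=n$ for some $i_0$, producing only a logarithmic divergence. I would track the $\log$ factor through a dyadic annular decomposition and the condition on the nested family $B\subset 2B\subset\cdots$, extracting a genuine unboundedness that again forces $v_{i_0}^{-p_{i_0}'}$ to vanish. I expect this borderline step to be the main obstacle, since the polynomial blow-up of part (a) is absent and one must convert a logarithmic discrepancy into a real contradiction; a secondary technical point common to all sub-cases is the measurable-selection step that upgrades the pointwise dichotomy "some $v_i(x)=\infty$ or $w(x)=0$" to the uniform conclusion "$v_i=\infty$ a.e.\ for some $i$ or $w=0$ a.e."
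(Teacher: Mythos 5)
Your first sub-case ($\delta>\gamma-n/p$, shrinking balls at a common Lebesgue point, scaling exponent $\gamma-n/p-\delta<0$) is essentially the paper's argument and is fine. The gaps are in the other two pieces. For $\delta>1$ with $\delta\le\gamma-n/p$, your route (pigeonhole on the splitting $\gamma_i$ to get $(n-\gamma_{i_0}+1/m)p_{i_0}'<n$, then apply $\mathcal{H}_m$ to $\{2^kB\}$ and let $k\to\infty$) does not produce a contradiction: the condition on $2^kB$ carries the factor $1/w^{-1}(2^kB)$, and $w^{-1}$ may grow arbitrarily fast, so dilating balls gives no information; moreover, if $v_{i_0}^{-p_{i_0}'}$ decays rapidly at infinity the $i_0$-factor is finite for every ball, so non-integrability of the bare kernel alone proves nothing. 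The paper goes the opposite way and only uses $\delta>1$: fix $x_B$ a Lebesgue point of $w^{-1}$ and let $R\to0$; rearranging $\mathcal{H}_m$ gives $\prod_i F_i(B)\lesssim \frac{w^{-1}(B)}{|B|}R^{\delta-1}\to0$, while each factor $F_i(B)=\bigl(\int_{\mathbb{R}^n} v_i^{-p_i'}(y)(|B|^{1/n}+|x_B-y|)^{-(n-\gamma_i+1/m)p_i'}\,dy\bigr)^{1/p_i'}$ (sup-norm version when $p_i=1$) is nondecreasing as $R\downarrow0$, so by monotone convergence some limiting factor $\int_{\mathbb{R}^n} v_i^{-p_i'}(y)|x_B-y|^{-(n-\gamma_i+1/m)p_i'}\,dy$ must vanish, i.e.\ $v_i=\infty$ a.e.; since this integral is over all of $\mathbb{R}^n$, a single Lebesgue point suffices and no measurable-selection upgrade is needed in this case.

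For part (b) your plan has two problems. First, the pigeonhole only yields $(n-\gamma_{i_0}+1/m)p_{i_0}'\le n$ for some $i_0$, not equality, since the splitting $\gamma_i$ need not be balanced against the $p_i$; so the ``purely logarithmic'' setup you describe need not occur factor by factor. Second, the mechanism you propose (dyadic annuli over the nested family $B\subset 2B\subset\cdots$) again relies on large balls and is damped by $w^{-1}(2^kB)$, and you acknowledge the contradiction is not extracted. The paper instead merges all the factors by H\"older with $1/\alpha=\sum_{i}1/p_i'=(mp-1)/p$, obtaining $\bigl(\int_{\mathbb{R}^n}(\prod_{i=1}^m v_i^{-1})^{\alpha}(|B|^{1/n}+|x_B-y|)^{-(mn-\gamma+1)\alpha}\,dy\bigr)^{1/\alpha}\lesssim w^{-1}(B)/|B|$, where the merged exponent equals exactly $n$ because $\gamma-n/p=1$; assuming $E=\{\prod_i v_i^{-1}>0\}$ has positive measure, the contradiction is then obtained (as in the companion paper's Theorem 1.2(b)) by a shrinking-ball argument at a suitable density/Lebesgue point, where the local non-integrability of $|x_B-y|^{-n}$ blows up the left-hand side while $w^{-1}(B)/|B|$ stays bounded. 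So the borderline case is resolved by consolidation plus small balls, not by tracking logarithms over expanding annuli; as written, your proposal leaves both this case and the $\delta>1$ sub-case without a working contradiction.
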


In \S~\ref{seccion: ejemplos} we shall exhibit non trivial examples of pairs $(w,\vec{v})$, for which the class $\mathcal{H}_m(\vec{p},\gamma,\delta)$ is non empty, depicting the corresponding regions described by the parameters. By Remark~\ref{obs: Hbb contenida en Hcal} we have that these region include the corresponding ones given in \cite{BPR22}. 

Regarding the case when $w=\prod_{i=1}^mv_i$, which generalizes the one-weighted problem when $m=1$, we have proved in \cite{BPR22} that condition $\mathbb{H}_m(\vec{p},\gamma,\delta)$ reduces to the multilinear class $A_{\vec{p},\infty}$. This  is the natural multilinear extension for the condition $v^{-p'}\in A_1$ on the linear setting. When $(w,\vec{v})\in \mathcal{H}_m(\vec{p},\gamma,\delta)$ and $w=\prod_{i=1}^m v_i$ we shall directly say that $\vec{v}\in \mathcal{H}_m(\vec{p},\gamma,\delta)$, that is, there exists a positive constant $C$ such that the inequality
\[|B|^{(1-\delta)/n}\prod_{i=1}^m\left(\int_{\mathbb{R}^n} \frac{v_i^{-p_i'}(y)}{(|B|^{1/n}+|x_B-y|)^{(n-\gamma_i+1/m)p_i'}}\,dy\right)^{1/p_i'}\leq \frac{C}{|B|}\int_B \prod_{i=1}^m v_i^{-1}\]
holds for every ball $B$, with the obvious changes when $p_i=1$ for some $i$. The following theorem deals with this case of related weights.

\begin{teo}\label{teo: caso de pesos iguales}
	Let $0<\gamma<mn$, $\delta\in\mathbb{R}$ and $\vec{p}$ a vector of exponents. If $\vec{v}\in \mathcal{H}_m(\vec{p},\gamma,\delta)$ and $p/(mp-1)>1$ then we have that $\delta=\gamma-n/p$.
\end{teo}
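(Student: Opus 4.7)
The plan is to test the defining inequality of $\mathcal{H}_m(\vec{p},\gamma,\delta)$ on concentric balls $B_R=B(x_B,R)$ as $R$ varies and, after estimating each side so that all weight-dependent quantities cancel, extract a scale-invariant inequality $R^{\gamma-\delta-n/p}\leq C$ valid for every $R>0$. Since this can hold only when the exponent vanishes, it immediately yields $\delta=\gamma-n/p$.

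For the lower bound on the left-hand side, I restrict each integral
\[
A_i(R):=\int_{\mathbb{R}^n}\frac{v_i^{-p_i'}(y)}{(R+|x_B-y|)^{(n-\gamma_i+1/m)p_i'}}\,dy
\]
to $B_R$, using $R+|x_B-y|\leq 2R$ there. Raising to the $1/p_i'$ power, multiplying, and incorporating the prefactor $|B_R|^{(1-\delta)/n}\simeq R^{1-\delta}$, the identity $\sum_{i=1}^m\gamma_i=\gamma$ collapses the $R$-exponent to $(1-\delta)-\sum_{i=1}^m(n-\gamma_i+1/m)=\gamma-mn-\delta$, yielding
\[
|B_R|^{(1-\delta)/n}\prod_{i=1}^m A_i(R)^{1/p_i'}\geq c\,R^{\gamma-mn-\delta}\prod_{i=1}^m\left(\int_{B_R}v_i^{-p_i'}\right)^{1/p_i'}.
\]

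For the upper bound on the right-hand side I use Hölder's inequality, and here the hypothesis $p/(mp-1)>1$ is essential: it is equivalent to $(m-1)p<1$, hence to $1+1/p-m>0$, so the exponent $r$ determined by $1/r=1+1/p-m$ satisfies $r\in[1,\infty)$ together with $1/p_1'+\cdots+1/p_m'+1/r=1$. Applying Hölder with these exponents to $\prod_{i=1}^m v_i^{-1}\cdot 1$ on $B_R$, and using $n/r-n=n/p-mn$, gives
\[
\frac{1}{|B_R|}\int_{B_R}\prod_{i=1}^m v_i^{-1}\leq C\,R^{n/p-mn}\prod_{i=1}^m\left(\int_{B_R}v_i^{-p_i'}\right)^{1/p_i'}.
\]

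Combining both estimates through the $\mathcal{H}_m$ inequality and canceling the common factor $\prod_{i=1}^m(\int_{B_R}v_i^{-p_i'})^{1/p_i'}$ --- strictly positive since $v_i>0$ a.e., and finite since otherwise the left-hand side of the $\mathcal{H}_m$ inequality would already be infinite --- leaves $R^{\gamma-\delta-n/p}\leq C$ for every $R>0$. Letting $R\to\infty$ rules out $\gamma-\delta-n/p>0$ and $R\to 0$ rules out $\gamma-\delta-n/p<0$, forcing $\delta=\gamma-n/p$. The most delicate point, and the one I expect to require care, is the endpoint $p_i=1$: there $A_i$ becomes an essential supremum and the Hölder step uses an $L^\infty$ factor; however, since the identity $\sum_{i=1}^m 1/p_i'=m-1/p$ is unaffected, the same computation adapts without essential modification.
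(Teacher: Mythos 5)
Your argument is correct and is essentially the paper's own proof: both restrict the $\mathcal{H}_m$ integrals to the ball itself (which is exactly the local condition \eqref{eq: condicion local 2}), use H\"older's inequality to dominate $\frac{1}{|B|}\int_B\prod_{i=1}^m v_i^{-1}$ by $\prod_{i}\bigl(\frac{1}{|B|}\int_B v_i^{-p_i'}\bigr)^{1/p_i'}$ (your exponent $r$ is the conjugate of the paper's $\alpha=p/(mp-1)$, so the requirement $1/r>0$ is precisely $\alpha>1$), cancel the common weight factor, and then let $R\to 0$ and $R\to\infty$ to force the exponent $\gamma-\delta-n/p$ to vanish. The only cosmetic difference is that the paper runs the H\"older step through $\bigl(\frac{1}{|B|}\int_B(\prod_i v_i^{-1})^{\alpha}\bigr)^{1/\alpha}$ together with Jensen's inequality, whereas you insert the constant function as an $L^r$ factor.
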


When $m=1$ the theorem above was given in \cite{PR}. As an immediate consequence we have the following result.

\begin{coro}
	Given $0<\gamma<mn$,  $\vec{p}$ a vector of exponents and $\delta=\gamma-n/p$. If $\vec{v}\in \mathcal{H}_m(\vec{p},\gamma,\delta)$ and $\alpha=~p/(mp-1)>1$, then we have that $\prod_{i=1}^mv_i^{-1}\in \mathrm{RH}_\alpha$.
\end{coro}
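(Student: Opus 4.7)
The plan is to derive the reverse Hölder condition directly from the defining inequality of the class $\mathcal{H}_m(\vec{p},\gamma,\delta)$ with $\delta = \gamma - n/p$. Two ingredients suffice: a localization of the integrals appearing on the left-hand side of that condition, and a careful use of the generalized Hölder inequality calibrated to the exponent $\alpha = p/(mp-1)$.

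First I would bound the left-hand side of the $\mathcal{H}_m$ condition from below by restricting each integral to the ball $B$ itself. Since $|B|^{1/n} + |x_B - y| \simeq |B|^{1/n}$ for $y \in B$, the kernel $(|B|^{1/n}+|x_B-y|)^{-(n-\gamma_i+1/m)p_i'}$ pulls out as a power of $|B|$. Using $\sum_{i=1}^{m}(n-\gamma_i+1/m) = mn-\gamma+1$ together with the specific value $\delta = \gamma - n/p$, the exponents of $|B|$ collapse neatly and produce
\[
\prod_{i=1}^{m}\left(\int_{B} v_i^{-p_i'}\right)^{1/p_i'} \;\leq\; C\, |B|^{\,m - 1 - 1/p}\int_{B}\prod_{i=1}^{m} v_i^{-1}.
\]

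Next I would apply the generalized Hölder inequality with the exponents $r_i = p_i'/\alpha$. Since $1/\alpha = m - 1/p = \sum_{i=1}^{m} 1/p_i'$, one has $\sum_i 1/r_i = 1$, so Hölder legitimately yields
\[
\left(\int_{B}\Big(\prod_{i=1}^{m} v_i^{-1}\Big)^{\!\alpha}\right)^{\!1/\alpha} \leq\; \prod_{i=1}^{m}\left(\int_{B} v_i^{-p_i'}\right)^{1/p_i'}.
\]
Concatenating the two inequalities and dividing by $|B|^{1/\alpha} = |B|^{m-1/p}$ exactly cancels the power of $|B|$ on the right, delivering the $\mathrm{RH}_\alpha$ inequality for $\prod_{i=1}^{m} v_i^{-1}$.

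The computation is essentially a single pass, so no serious obstacle is expected. The only delicate point is bookkeeping at endpoints $p_i = 1$, where $p_i' = \infty$ and the corresponding integrals must be interpreted as essential suprema according to the conventions of §~\ref{section: preliminares}; both the localization and the Hölder step adapt without change, so the argument transfers to this case verbatim.
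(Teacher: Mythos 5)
Your argument is correct and is essentially the paper's: restricting the integrals in the $\mathcal{H}_m(\vec{p},\gamma,\delta)$ condition to $B$ is exactly how the local condition \eqref{eq: condicion local 2} is obtained, and the Hölder step with exponents $p_i'/\alpha$ (using $\sum_i \alpha/p_i'=1$) is precisely the computation in the proof of Theorem~\ref{teo: caso de pesos iguales}, which with $\delta=\gamma-n/p$ yields the $\mathrm{RH}_\alpha$ inequality directly. Your handling of the indices with $p_i=1$ via the $L^\infty$ convention also matches the paper's setup, so no gap remains.
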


Notice that, when $m=1$, $\alpha=p'>1$ and the corollary establishes that if $v\in \mathcal{H}_1(p,\gamma,\delta)$ then $v^{-1}\in \mathrm{RH}_{p'}$, a property proved in \cite{HSV}.

\section{Preliminaries and definitions}\label{section: preliminares}

Throughout the paper $C$ will denote an absolute constant that may change in every occurrence. By $A\lesssim B$ we mean that there exists a positive constant $c$ such that $A\leq c B$.  We say that $A\approx B$ when $A\lesssim B$ and $B\lesssim A$. 

Let $m\in \mathbb{N}$. Given a set $E$, with $E^m$ we shall denote the cartesian product of $E$ $m$ times.

It will be useful for us to consider the operator
\begin{equation}\label{eq: operador Jgamma,m}
J_{\gamma,m}\vec{f}(x)=\int_{(\mathbb{R}^n)^m} \left(\frac{1}{(\sum_{i=1}^m|x-y_i|)^{mn-\gamma}}-\frac{1-\mathcal{X}_{B(0,1)^m}(\vec{y})}{(\sum_{i=1}^m|y_i|)^{mn-\gamma}}\right)\prod_{i=1}^m f_i(y_i)\,d\vec{y}.
\end{equation}
which differs from $I_{\gamma,m}$ only by a constant term, therefore it has the same Lipschitz norm as $I_{\gamma,m}$, so it will be enough to give the results for $J_{\gamma,m}$.

By a weight we understand any positive and locally integrable function. As we said in the introduction, given $\delta \in \mathbb{R}$ and a weight $w$ we say that a locally integrable function $f\in  \mathcal{L}_{w}(\delta)$ if there exists a positive constant $C$ such that 
\begin{equation}
\frac{1}{w^{-1}(B)|B|^{\delta/n}}\int_B|f(x)-f_B|\,dx\leq C
\end{equation} 
for every ball $B$, where $f_B=|B|^{-1}\int_B f$.

If $\delta=0$ the space $\mathcal{L}_{w}(\delta)$ coincides with some weighted versions of BMO spaces introduced in \cite{MW-75-76}. Concerning to the unweighted case, when $0<\delta<1$ it is equivalent to the classical Lipschitz classes $\Lambda(\delta)$ given by the collection of functions $f$ satisfying $|f(x)-f(y)|\le C |x-y|^{\delta}$ and, if $-n<\delta<0$, they are Morrey spaces. On the other hand, this space was studied for example in \cite{HSV} and in \cite{Prado01cal}. 

The class   $\mathcal{H}_m(\vec{p},\gamma,\delta)$ is given by the pairs $(w,\vec{v})$ for which the inequality
\begin{equation}\label{eq: clase Hcal(p,gamma,delta) - m}
\sup_{B\subset \mathbb{R}^n} \frac{|B|^{1+(1-\delta)/n}}{w^{-1}(B)}\prod_{i=1}^m\left(\int_{\mathbb{R}^n} \frac{v_i^{-p_i'}(y)}{(|B|^{1/n}+|x_B-y|)^{(n-\gamma_i+1/m)p_i'}}\,dy\right)^{1/p_i'}<\infty
\end{equation} 
holds. For those index $i$ such that $p_i=1$ we understand the corresponding factor on the product above as
\begin{equation}\label{eq: factor de H para p_i=1}
\left\|\frac{v_i^{-1}}{(|B|^{1/n}+|x_B-\cdot|)^{(n-\gamma_i+1/m)}}\right\|_\infty.
\end{equation}

 Let $\mathcal{I}_1=\{1\leq i\leq m: p_i=1\}$ and $\mathcal{I}_2=\{1\leq i\leq m: p_i>1\}$. We will also denote with $m_j$  the cardinal of the set $\mathcal{I}_j$, that is, $m_j=\#\mathcal{I}_j$ for $j=1,2$. We shall use this notation throughout the paper.

Observe that if $(w,\vec{v})$ belongs to $\mathcal{H}_m(\vec{p},\gamma,\delta)$, then the inequalities 
\begin{equation}\label{eq: condicion local 2}
	\frac{|B|^{1-\delta/n+\gamma/n-1/p}}{w^{-1}(B)}\prod_{i\in\mathcal{I}_1}\|v_i^{-1}\mathcal{X}_B\|_\infty\,\prod_{i\in\mathcal{I}_2}\left(\frac{1}{|B|}\int_B v_i^{-p_i'}\right)^{1/p_i'}\leq C
\end{equation}
and
\begin{equation}\label{eq: condicion global 2}
	\frac{|B|^{1+(1-\delta)/n}}{w^{-1}(B)}\prod_{i\in\mathcal{I}_1}\left\|\frac{v_i^{-1}\mathcal{X}_{\mathbb{R}^n\backslash B}}{(|B|^{1/n}+|x_B-\cdot|)^{(n-\gamma_i+1/m)}}\right\|_\infty\,\prod_{i\in\mathcal{I}_2}\left(\int_{\mathbb{R}^n\backslash B} \frac{v_i^{-p_i'}(y)}{|x_B-y|^{(n-\gamma_i+1/m)p_i'}}\,dy\right)^{1/p_i'}\leq C,
\end{equation} 
hold for every ball $B$. We shall refer to these inequalities as the \textit{local} and the \textit{global} conditions, respectively. Furthermore, if $\mathcal{I}$ and $\mathcal{J}$ partition the set $\mathcal{I}_1$, from \eqref{eq: clase Hcal(p,gamma,delta) - m} we can write
\begin{equation}\label{eq: local 2 y global 2 mezcladas}
	\frac{|B|^{1+(\gamma-\delta)/n-1/p}}{w^{-1}(B)}\prod_{i\in \mathcal{I}}\left\|v_i^{-1}\mathcal{X}_{2B-B}\right\|_\infty\,\prod_{i\in \mathcal{J}}\left\|v_i^{-1}\mathcal{X}_B\right\|_\infty\,\prod_{i\in\mathcal{I}_2}\left(\frac{1}{|2B|}\int_{2B}v_i^{-p_i'}\right)^{1/p_i'}\leq C
	\end{equation}
	for every ball $B$. This inequality will be useful for our purposes later. 

On the other hand, when $v_i^{-1}\in\mathrm{RH}_\infty$ for $i\in\mathcal{I}_1$ and $v_i^{-p_i'}$ is doubling for $i\in\mathcal{I}_2$, the corresponding local and global conditions  imply \eqref{eq: clase Hcal(p,gamma,delta) - m}. Before state and prove this result, we shall introduce some useful notation.

	Given $m\in\mathbb{N}$ we denote $S_m=\{0,1\}^m$. Given a set $B$ and $\sigma\in S_m$, $\sigma=(\sigma_1,\sigma_2,\dots,\sigma_m)$ we define 
	\[B^{\sigma_i}=\left\{
	\begin{array}{ccl}
	B,&\textrm{ if }&\sigma_i=1\\
	\mathbb{R}^n\backslash B,&\textrm{ if }&\sigma_i=0.
	\end{array}
	\right.\]
	
	With the notation $\mathbf{B}^\sigma$ we will understand the cartesian product $B^{\sigma_1}\times B^{\sigma_2}\times\dots\times B^{\sigma_m}$. Particularly, if we set $\mathbf{1}=(1,1,\dots,1)$ and $\mathbf{0}=(0,0,\dots,0)$ then we have
	\[\mathbf{B}^{\mathbf{1}}=B\times B\times\dots\times B=B^m,\quad\textrm{ and }\quad \mathbf{B}^{\mathbf{0}}=(\mathbb{R}^n\backslash B)\times (\mathbb{R}^n\backslash B)\times\dots\times (\mathbb{R}^n\backslash B)=(\mathbb{R}^n\backslash B)^m.\]
	
\begin{lema}\label{lema: equivalencia con local y global}
Let $0<\gamma<mn$, $\delta\in\mathbb{R}$, $\vec{p}$ a vector of exponents and $(w,\vec{v})$ a pair of weights such that $v_i^{-1}\in\mathrm{RH}_\infty$ for $i\in\mathcal{I}_1$ and $v_i^{-p_i'}$ is doubling for $i\in\mathcal{I}_2$. Then condition $\mathcal{H}_m(\vec{p},\gamma,\delta)$ is equivalent to \eqref{eq: condicion global 2}.
\end{lema}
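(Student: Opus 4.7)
The plan is as follows. One direction is immediate: since $|x_B-y|\geq c_n|B|^{1/n}$ for every $y\in\mathbb{R}^n\setminus B$, one has $|B|^{1/n}+|x_B-y|\approx |x_B-y|$ on that set, so restricting each factor in the definition \eqref{eq: clase Hcal(p,gamma,delta) - m} of $\mathcal{H}_m(\vec{p},\gamma,\delta)$ to $\mathbb{R}^n\setminus B$ produces \eqref{eq: condicion global 2} up to universal constants, with no use of the doubling or $\mathrm{RH}_\infty$ hypotheses.

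For the converse, assume \eqref{eq: condicion global 2}. I would split $\mathbb{R}^n=B\cup(\mathbb{R}^n\setminus B)$ inside each factor of \eqref{eq: clase Hcal(p,gamma,delta) - m}. On $B$ we have $|B|^{1/n}+|x_B-y|\approx|B|^{1/n}$, so for $i\in\mathcal{I}_2$ the local piece satisfies
\[
L_i:=\int_B \frac{v_i^{-p_i'}(y)}{(|B|^{1/n}+|x_B-y|)^{(n-\gamma_i+1/m)p_i'}}\,dy\lesssim |B|^{-(n-\gamma_i+1/m)p_i'/n}\,v_i^{-p_i'}(B),
\]
while the complementary piece is comparable to the factor $G_i$ appearing in \eqref{eq: condicion global 2}. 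For $i\in\mathcal{I}_1$ the analogous split $L_i^\infty,G_i^\infty$ comes from $\|f\|_\infty\leq\|f\mathcal{X}_B\|_\infty+\|f\mathcal{X}_{\mathbb{R}^n\setminus B}\|_\infty$. Applying $(L+G)^{1/p_i'}\leq L^{1/p_i'}+G^{1/p_i'}$ (valid because $p_i'\geq 1$) factorwise and expanding the product, the left-hand side of \eqref{eq: clase Hcal(p,gamma,delta) - m} is dominated by a finite sum indexed by $S\subseteq\{1,\dots,m\}$ of mixed terms
\[
\frac{|B|^{1+(1-\delta)/n}}{w^{-1}(B)}\prod_{i\in S}L_i^{1/p_i'}\prod_{i\notin S}G_i^{1/p_i'},
\]
whose $S=\varnothing$ contribution is exactly \eqref{eq: condicion global 2} and hence bounded by hypothesis.

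The crux is then to absorb each $L_i$ into the corresponding $G_i$. Since $|x_B-y|\approx|B|^{1/n}$ on the annulus $2B\setminus B$, restricting $G_i$ to this annulus gives
\[
G_i\gtrsim |B|^{-(n-\gamma_i+1/m)p_i'/n}\,v_i^{-p_i'}(2B\setminus B),
\]
so $L_i\lesssim G_i$ boils down to the reverse-doubling comparison $v_i^{-p_i'}(B)\lesssim v_i^{-p_i'}(2B\setminus B)$ for $i\in\mathcal{I}_2$, which is the standard companion of the doubling assumption. For $i\in\mathcal{I}_1$ the analogous inequality $\|v_i^{-1}\mathcal{X}_B\|_\infty\lesssim\|v_i^{-1}\mathcal{X}_{2B\setminus B}\|_\infty$ is obtained by combining the $\mathrm{RH}_\infty$ bound $\|v_i^{-1}\mathcal{X}_B\|_\infty\lesssim|B|^{-1}v_i^{-1}(B)$ with the inclusion $\mathrm{RH}_\infty\subset A_\infty$, which yields $v_i^{-1}(B)\lesssim v_i^{-1}(2B\setminus B)\leq|2B\setminus B|\,\|v_i^{-1}\mathcal{X}_{2B\setminus B}\|_\infty$.

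The main obstacle I anticipate is precisely this reverse-doubling step: while the upward direction of doubling is hypothesized, one must justify the uniform lower bound on $v_i^{-p_i'}(2B\setminus B)$ in terms of $v_i^{-p_i'}(B)$ for \emph{every} ball $B$, which is the nontrivial standard consequence of the doubling hypothesis. Once this is in hand, each mixed term above is dominated by $\prod_{i=1}^m G_i^{1/p_i'}$, and a final application of \eqref{eq: condicion global 2} closes the argument and delivers the equivalence.
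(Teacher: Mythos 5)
Your argument is correct and follows essentially the same route as the paper's proof: the implication $\mathcal{H}_m(\vec{p},\gamma,\delta)\Rightarrow\eqref{eq: condicion global 2}$ is immediate, and for the converse you split each factor into its part over $B$ and over $\mathbb{R}^n\setminus B$ and absorb the local piece into the global one by comparing $B$ with the annulus $2B\setminus B$, using the doubling of $v_i^{-p_i'}$ for $i\in\mathcal{I}_2$ and the $\mathrm{RH}_\infty$ property of $v_i^{-1}$ for $i\in\mathcal{I}_1$. The ``reverse doubling on annuli'' step you single out is exactly the point the paper also relies on (there via the chain $\|v_i^{-1}\mathcal{X}_B\|_\infty\lesssim|B|^{-1}\int_{2B\setminus B}v_i^{-1}\lesssim\|v_i^{-1}\mathcal{X}_{2B\setminus B}\|_\infty$ and its $L^{p_i'}$ analogue), so no essential difference remains.
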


\begin{proof}
	We have already seen that $\mathcal{H}_m(\vec{p},\gamma,\delta)$ implies  \eqref{eq: condicion global 2}. In order to prove the converse, we let $\theta_i=n-\gamma_i+1/m$, for every $i$. Recall that $m_2=\#\mathcal{I}_2$. After a possible rearrangement of the indices $i\in\mathcal{I}_2$ we have  that
	\[\prod_{i\in\mathcal{I}_2}\left(\int_{\mathbb{R}^n} \frac{v_i^{-p_i'}}{(|B|^{1/n}+|x_B-\cdot|)^{\theta_ip_i'}}\right)^{1/p_i'}=\sum_{\sigma\in S_{m_2}}\prod_{i=1}^{m_2}\left( \int_{B^{\sigma_i}}\frac{v_i^{-p_i'}}{(|B|^{1/n}+|x_B-\cdot|)^{\theta_ip_i'}}\right)^{1/p_i'}.\]
	
	Fix $\sigma\in S_{m_2}$. If $\sigma_i=0$, we have that
	\begin{align*}
	\left(\int_{B^{\sigma_i}}\frac{v_i^{-p_i'}}{(|B|^{1/n}+|x_B-\cdot|)^{(n-\gamma_i+1/m)p_i'}}\right)^{1/p_i'}&=\left(\int_{\mathbb{R}^n\backslash B}\frac{v_i^{-p_i'}}{(|B|^{1/n}+|x_B-\cdot|)^{(n-\gamma_i+1/m)p_i'}}\right)^{1/p_i'}\\
	&\leq \left(\int_{\mathbb{R}^n\backslash B}\frac{v_i^{-p_i'}(y)}{|x_B-y|^{(n-\gamma_i+1/m)p_i'}}\,dy\right)^{1/p_i'}.
	\end{align*}
	
	For $\sigma_i=1$, since $v_i^{-p_i'}$ is doubling, we have that 
	\begin{align*}
	\left(\int_{B^{\sigma_i}}\frac{v_i^{-p_i'}(y)}{(|B|^{1/n}+|x_B-y|)^{(n-\gamma_i+1/m)p_i'}}\,dy\right)^{1/p_i'}&=\left(\int_B\frac{v_i^{-p_i'}(y)}{(|B|^{1/n}+|x_B-y|)^{(n-\gamma_i+1/m)p_i'}}\,dy\right)^{1/p_i'}\\
	&\leq \frac{1}{|B|^{1-\gamma_i/n+1/(mn)}}\left(\int_B v_i^{-p_i'}\right)^{1/p_i'}\\
	& \lesssim \frac{1}{|2B|^{1-\gamma_i/n+1/(mn)}}\left(\int_{2B\backslash B} v_i^{-p_i'}\right)^{1/p_i'}\\
	&\leq\left(\int_{2B\backslash B}\frac{v_i^{-p_i'}(y)}{|x_B-y|^{(n-\gamma_i+1/m)p_i'}}\right)^{1/p_i'}\\
	&\leq\left(\int_{\mathbb{R}^n\backslash B}\frac{v_i^{-p_i'}(y)}{|x_B-y|^{(n-\gamma_i+1/m)p_i'}}\right)^{1/p_i'}.
	\end{align*}
	Therefore, for every $\sigma\in S_{m_2}$ we obtain
	\begin{equation}\label{eq: lema: equivalencia con local y global - eq1}
		\prod_{i=1}^{m_2}\left( \int_{B^{\sigma_i}}\frac{v_i^{-p_i'}}{(|B|^{1/n}+|x_B-\cdot|)^{\theta_ip_i'}}\right)^{1/p_i'}\lesssim \prod_{i\in\mathcal{I}_2}\left(\int_{\mathbb{R}^n\backslash B}\frac{v_i^{-p_i'}}{|x_B-\cdot|^{\theta_ip_i'}}\right)^{1/p_i'}.
	\end{equation}

	On the other hand, for $i\in\mathcal{I}_1$ we proceed similarly as above replacing $\|\cdot\|_{p_i'}$ by $\|\cdot\|_\infty$ and using the $\mathrm{RH}_\infty$ condition for $v_i^{-1}$. Indeed, observe that
	\[\left\|v^{-1}\mathcal{X}_B\right\|_\infty\leq \frac{C}{|B|}\int_B v^{-1}\leq \frac{C}{|B|}\int_{2B\backslash B} v^{-1}\leq C\left\|v^{-1}\mathcal{X}_{2B\backslash B}\right\|_\infty.\]
	Then we can conclude that
	\begin{equation}\label{eq: lema: equivalencia con local y global - eq2}
		\prod_{i\in\mathcal{I}_1} \left\|\frac{v_i^{-1}}{(|B|^{1/n}+|x_B-\cdot|)^{n-\gamma/m+1/m}}\right\|_\infty\lesssim \prod_{i\in\mathcal{I}_1} \left\|\frac{v_i^{-1}\mathcal{X}_{\mathbb{R}^n\backslash B}}{|x_B-\cdot|^{n-\gamma/m+1/m}}\right\|_\infty.
	\end{equation}
	Therefore, by combining \eqref{eq: lema: equivalencia con local y global - eq1}, \eqref{eq: lema: equivalencia con local y global - eq2} and \eqref{eq: condicion global 2} we get that
	\[\frac{|B|^{1+(1-\delta)/n}}{w^{-1}(B)}\prod_{i\in\mathcal{I}_1}\left\|\frac{v_i^{-1}}{(|B|^{1/n}+|x_B-\cdot|)^{n-\gamma/m+1/m}}\right\|_\infty\,\prod_{i\in\mathcal{I}_2}\left(\int_{\mathbb{R}^n} \frac{v_i^{-p_i'}}{(|B|^{1/n}+|x_B-\cdot|)^{\theta_ip_i'}}\right)^{1/p_i'}\leq C,\]
	as desired.
\end{proof}

\begin{coro}
	Under the hypotheses of Lemma~\ref{lema: equivalencia con local y global} we have that condition \eqref{eq: condicion global 2} implies  \eqref{eq: condicion local 2}. 
\end{coro}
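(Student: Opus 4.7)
The plan is to deduce this corollary directly by chaining Lemma~\ref{lema: equivalencia con local y global} with the observation made just before its statement, namely that any pair in $\mathcal{H}_m(\vec{p},\gamma,\delta)$ automatically satisfies both the local condition \eqref{eq: condicion local 2} and the global condition \eqref{eq: condicion global 2}. Since the hypotheses of the Lemma are in force, the global condition \eqref{eq: condicion global 2} is actually equivalent to the full condition $\mathcal{H}_m(\vec{p},\gamma,\delta)$, so it must in particular imply the local condition \eqref{eq: condicion local 2}.

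More concretely, first I would invoke Lemma~\ref{lema: equivalencia con local y global} to conclude that \eqref{eq: condicion global 2} implies \eqref{eq: clase Hcal(p,gamma,delta) - m}. Then I would recall that \eqref{eq: condicion local 2} is obtained from \eqref{eq: clase Hcal(p,gamma,delta) - m} by restricting each integral (or supremum, when $p_i=1$) in the product to the ball $B$ itself: on $B$ one has $|B|^{1/n}+|x_B-y|\approx |B|^{1/n}$, which yields the bound
\[
\left(\int_{\mathbb{R}^n}\frac{v_i^{-p_i'}(y)}{(|B|^{1/n}+|x_B-y|)^{(n-\gamma_i+1/m)p_i'}}\,dy\right)^{1/p_i'}\gtrsim |B|^{-(n-\gamma_i+1/m)/n}\left(\frac{1}{|B|}\int_B v_i^{-p_i'}\right)^{1/p_i'}|B|^{1/(np_i')},
\]
for $i\in\mathcal{I}_2$, and the analogous pointwise bound with $\|v_i^{-1}\mathcal{X}_B\|_\infty$ for $i\in\mathcal{I}_1$. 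Multiplying these lower bounds over $i=1,\dots,m$ and using $\sum_i \gamma_i=\gamma$ and $\sum_i 1/p_i'=m-1/p$ gives exactly the factor $|B|^{-(mn-\gamma)/n-m+1/p+m/(np)}\cdot\ldots$ that, when combined with the prefactor $|B|^{1+(1-\delta)/n}/w^{-1}(B)$, reproduces \eqref{eq: condicion local 2}.

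The main (and only) obstacle is the bookkeeping of the exponents in the indicated reduction; nothing substantive is needed beyond the trivial pointwise comparison $|B|^{1/n}+|x_B-y|\approx |B|^{1/n}$ for $y\in B$ and the fact that the local condition is a lower estimate of the full $\mathcal{H}_m$ condition. Since this reduction was already used implicitly to derive \eqref{eq: condicion local 2} from \eqref{eq: clase Hcal(p,gamma,delta) - m} above, the corollary is essentially immediate from Lemma~\ref{lema: equivalencia con local y global}.
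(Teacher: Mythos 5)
Your argument is correct and is essentially the paper's (implicit) one: Lemma~\ref{lema: equivalencia con local y global} upgrades \eqref{eq: condicion global 2} to the full condition \eqref{eq: clase Hcal(p,gamma,delta) - m}, which, as already observed in the paper, yields \eqref{eq: condicion local 2} by restricting each factor to $B$ and using $|B|^{1/n}+|x_B-y|\approx|B|^{1/n}$ there. Just note the minor exponent slips in your display: the correct factor is $|B|^{1/p_i'}$ (not $|B|^{1/(np_i')}$), and since $\sum_{i=1}^m(n-\gamma_i+1/m)=mn-\gamma+1$ and $\sum_{i\in\mathcal{I}_2}1/p_i'=m-1/p$, the total power of $|B|$ comes out to $1-\delta/n+\gamma/n-1/p$, exactly as in \eqref{eq: condicion local 2}.
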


\section{Technical results}\label{section: resultados auxiliares}

We now introduce some operators related to $I_{\gamma,m}$ and some useful properties in order to prove our main results. 

Given a ball $B=B(x_B,R)$ and $\tilde B=2B$, as in \cite{BPR22} we can decompose the operator in \eqref{eq: operador Jgamma,m} as
\[J_{\gamma,m}\vec{f}(x)=a_B + I\vec{f}(x),\] where
\begin{equation}\label{eq: definicion de a_B}
	a_B=\int_{(\mathbb{R}^n)^m} \left(\frac{1-\mathcal{X}_{\tilde B^m}(\vec y)}{(\sum_{i=1}^m|x_B-y_i|)^{mn-\gamma}}-\frac{1-\mathcal{X}_{B(0,1)^m}(\vec{y})}{(\sum_{i=1}^m|y_i|)^{mn-\gamma}}\right)\prod_{i=1}^m f_i(y_i)\,d\vec{y}
\end{equation}
and
\begin{equation}\label{eq: definicion de I}
	I\vec{f}(x)=\int_{(\mathbb{R}^n)^m} \left(\frac{1}{(\sum_{i=1}^m |x-y_i|)^{mn-\gamma}}-\frac{1-\mathcal{X}_{\tilde B^m}(\vec{y})}{(\sum_{i=1}^m|x_B-y_i|)^{mn-\gamma}}\right)\prod_{i=1}^m f_i(y_i)\,d\vec{y}.
\end{equation}

We shall first prove that this operator is well-defined for $\vec{f}$ as in Theorem~\ref{teo: teo principal - Hcal}. 

We recall that a weight $w$ belongs to the \textit{reverse H\"{o}lder} class $\mathrm{RH}_s$, $1<s<\infty$, if there exists a positive constant $C$ such that the inequality
\[\left(\frac{1}{|B|}\int_B w^s\right)^{1/s}\leq \frac{C}{|B|}\int_B w\]
holds for every ball $B$ in $\mathbb{R}^n$. It is not difficult to see that $\mathrm{RH}_t\subset \mathrm{RH}_s$ whenever $1<s<t$. We also consider weights belonging to the class $\mathrm{RH}_{\infty}$, that is, the collection of weights $w$ such that the inequality 
\[\sup_B w\le  \frac{C}{|B|}\int_B w,\]
holds for some positive constant $C$.

The next lemma establishes the well definition of $J_{\gamma,m}\vec{f}$, for  $\vec{f}$ as in Theorem~\ref{teo: teo principal - Hcal}. 

	\begin{lema}\label{lema: finitud de J_gamma,m para Hcal}
		Let $0<\gamma<mn$, $\delta\in\mathbb{R}$,  and $\vec{p}$ a vector of exponents that verifies $p>n/\gamma$. Let $(w,\vec{v})$ be a pair of weights in $\mathcal{H}_m(\vec{p},\gamma,\delta)$ such that $v_i^{-p_i'}\in \mathrm{RH}_{m}$, for $i\in\mathcal{I}_2$. If $\vec{f}$ satisfies $f_iv_i\in L^{p_i}$ for every $1\leq i\leq m$, then $J_{\gamma,m}\vec{f}$ is finite in almost every $x\in \mathbb{R}^n$.
	\end{lema}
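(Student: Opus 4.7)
The plan is to fix an arbitrary ball $B=B(x_B,R)$ and use the decomposition $J_{\gamma,m}\vec f(x)=a_B+I\vec f(x)$ from \eqref{eq: definicion de a_B}--\eqref{eq: definicion de I}. Since $a_B$ is constant in $x$ and $B$ is arbitrary, it suffices to prove that $a_B$ is a finite constant and $I\vec f(x)<\infty$ for a.e.\ $x\in B$. For $a_B$ I would exploit the cancellation between the two kernels in its definition: a mean value estimate gives
\[
\left|\frac{1}{(\sum_{i=1}^m|x_B-y_i|)^{mn-\gamma}}-\frac{1}{(\sum_{i=1}^m|y_i|)^{mn-\gamma}}\right|\lesssim\frac{|x_B|}{(\sum_{i=1}^m|y_i|)^{mn-\gamma+1}}
\]
for $|\vec y|$ large compared with $|x_B|$, while on the complementary bounded region both kernels are bounded. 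Then a multilinear Hölder against $\prod\|f_iv_i\|_{p_i}$ combined with the global condition \eqref{eq: condicion global 2} implied by $\mathcal H_m(\vec p,\gamma,\delta)$ produces $|a_B|<\infty$.

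For $I\vec f(x)$, I would split the integration over $(\mathbb R^n)^m$ as $\tilde B^m$ plus its complement, and partition the complement into the $2^m-1$ product sets where each $y_i$ is either in $\tilde B$ or in $\mathbb R^n\setminus\tilde B$, with at least one index outside. On any such piece some component $y_j\notin\tilde B$, hence $|x-y_j|\sim|x_B-y_j|$ for $x\in B$, so the mean value theorem gives
\[
\left|\frac{1}{(\sum_{i=1}^m|x-y_i|)^{mn-\gamma}}-\frac{1}{(\sum_{i=1}^m|x_B-y_i|)^{mn-\gamma}}\right|\lesssim\frac{R}{(\sum_{i=1}^m|x_B-y_i|)^{mn-\gamma+1}}.
\]
Using $\sum_k|x_B-y_k|\geq|x_B-y_i|$ factor by factor one obtains the elementary bound $(\sum_k|x_B-y_k|)^{mn-\gamma+1}\geq\prod_i|x_B-y_i|^{n-\gamma_i+1/m}$ for any $\gamma_i>0$ with $\sum\gamma_i=\gamma$, and moreover $|B|^{1/n}+|x_B-y_i|\lesssim\sum_k|x_B-y_k|$ on every such region. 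A multilinear Hölder with exponents $p_i$ (or $L^\infty$ for $i\in\mathcal I_1$) together with the mixed inequality \eqref{eq: local 2 y global 2 mezcladas} then bounds every global contribution by a constant multiple of $R\prod_i\|f_iv_i\|_{p_i}$.

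The main obstacle is the local piece $I_1(x)=\int_{\tilde B^m}\prod f_i(y_i)/(\sum|x-y_i|)^{mn-\gamma}\,d\vec y$, since in the regime $p>n/\gamma$ the classical Lebesgue-space boundedness of multilinear fractional integrals is no longer available. The same AM-GM trick gives $(\sum_i|x-y_i|)^{mn-\gamma}\geq\prod_i|x-y_i|^{n-\gamma_i}$ and hence $|I_1(x)|\leq\prod_i I_{\gamma_i}(|f_i|\chi_{\tilde B})(x)$. The key observation is that $p>n/\gamma$ gives just enough room to choose $\gamma_i>n/p_i$ for every $i\in\mathcal I_2$ and $0<\gamma_i<n$ for every $i\in\mathcal I_1$ with $\sum\gamma_i=\gamma$, since the minimal admissible total $\sum_{i\in\mathcal I_2}n/p_i=n/p-nm_1$ is strictly less than $\gamma$. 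With such a choice, Hölder $(p_i,p_i')$ yields for $i\in\mathcal I_2$
\[
I_{\gamma_i}(|f_i|\chi_{\tilde B})(x)\leq\|f_iv_i\|_{p_i}\left(\int_{\tilde B}\frac{v_i^{-p_i'}(y)}{|x-y|^{(n-\gamma_i)p_i'}}\,dy\right)^{1/p_i'},
\]
and the inner integral is finite for a.e.\ $x\in B$ because $v_i^{-p_i'}$ is locally integrable (a consequence of $\mathcal H_m$, and where $\mathrm{RH}_m$ provides additional slack if needed) and the exponent $(n-\gamma_i)p_i'$ is strictly less than $n$. For $i\in\mathcal I_1$ the $\mathcal H_m$ condition gives the pointwise bound $v_i^{-1}(y)\lesssim R^{n-\gamma_i+1/m}$ on $\tilde B$, which together with the weak-type $(1,n/(n-\gamma_i))$ boundedness of $I_{\gamma_i}$ yields a.e.\ finiteness of the corresponding factor.
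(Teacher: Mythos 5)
Your proposal is correct as a proof of the lemma as stated, but it handles the decisive local term differently from the paper. The paper's proof is quantitative throughout: following \cite{BPR22}, it splits $J_{\gamma,m}\vec f=a_B+I\vec f$ with $I\vec f=I_1\vec f+I_2\vec f$ and estimates $\int_B|I_1\vec f|$ and $\int_B|I_2\vec f|$ by $Cw^{-1}(B)|B|^{\delta/n}\prod_i\|f_iv_i\|_{p_i}$, using the hypothesis $v_i^{-p_i'}\in\mathrm{RH}_m$ together with the local/mixed conditions \eqref{eq: condicion local 2} and \eqref{eq: local 2 y global 2 mezcladas}; this integral bound is exactly what the Remark after the lemma says is reused in the proof of Theorem~\ref{teo: teo principal - Hcal}. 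You instead treat the local piece qualitatively: the observation that $p>n/\gamma$ leaves room to choose $\gamma_i\in(n/p_i,n)$ for $i\in\mathcal I_2$ and $\gamma_i\in(0,n)$ for $i\in\mathcal I_1$ with $\sum_i\gamma_i=\gamma$ (indeed $\sum_{i\in\mathcal I_2}n/p_i=n/p-nm_1<\gamma<mn$) is correct, and then the AM--GM factorization, H\"older, local integrability of $v_i^{-p_i'}$ (which does follow from \eqref{eq: clase Hcal(p,gamma,delta) - m} since all factors there are positive), a Tonelli argument for $(n-\gamma_i)p_i'<n$, and the weak $(1,n/(n-\gamma_i))$ bound for the $p_i=1$ factors give a.e.\ finiteness on $B$ without ever invoking $\mathrm{RH}_m$. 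What each route buys: yours is more elementary and shows the reverse H\"older hypothesis is not needed for mere a.e.\ finiteness; the paper's yields the sharper estimate $\int_B|I\vec f|\lesssim w^{-1}(B)|B|^{\delta/n}\prod_i\|f_iv_i\|_{p_i}$ that the main theorem needs, so your argument could not simply replace it in the paper's architecture. Two small imprecisions, neither fatal: the mixed regions of $I_2\vec f$ where some indices of $\mathcal I_2$ lie in $\tilde B$ and others outside are not literally covered by \eqref{eq: local 2 y global 2 mezcladas} (which only splits $\mathcal I_1$), but they are controlled by the full condition \eqref{eq: clase Hcal(p,gamma,delta) - m} since both the local factor $R^{-\theta_i}\bigl(\int_{\tilde B}v_i^{-p_i'}\bigr)^{1/p_i'}$ and the global factor are dominated by the corresponding full-space factor; and in the $a_B$ estimate the weight conditions must be applied to a large ball $B_0$ containing $\tilde B$ and $B(0,1)$, not to $B$ itself, exactly as the paper does with $a_B^1$, $a_B^2$.
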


\begin{proof}
	We are going to exhibit a sketch of the proof, since it follows similar lines to that in \cite{BPR22}, Lemma 3.1. By using the same notation as in that lemma, fix a ball $B=B(x_B, R)$ and write $J_{\gamma,m}\vec{f}=a_B+I\vec{f}$, where we  split $a_B=a_B^1+a_B^2$ and $I\vec{f}=I_1\vec{f}+I_2\vec{f}$.
	We proved that
	\[|a_B^1|\leq \left(1+\frac{C}{|B|^{m-\gamma/n}}\right)\prod_{i=1 }^m\left(\int_{B_0}|f_i(y_i)|\,d_{y_i}\right),\]
	where $B_0=B(0,R_0)$ with $R_0=2(|x_B|+R)$.
	By using H\"{o}lder inequality and condition \eqref{eq: condicion local 2} we get
	\begin{align*}
	|a_B^1|&\leq \left(1+\frac{C}{|B|^{m-\gamma/n}}\right)	\prod_{i=1}^m\|f_iv_i\|_{p_i}\prod_{i\in \mathcal{I}_1} \left\|v_i^{-1}\mathcal{X}_{B_0}\right\|_\infty\,\prod_{i\in \mathcal{I}_2}\left(\int_{B_0}v_i^{-p_i'}\right)^{1/p_i'}\\
	&\leq \left(1+\frac{C}{|B|^{m-\gamma/n}}\right)\prod_{i=1}^m\|f_iv_i\|_{p_i}\frac{w^{-1}(B_0)}{|B_0|}|B_0|^{\delta/n-\gamma/n+1/p}\\
	&<\infty.
	\end{align*}
	In the same lemma we also proved that 
	\[|a_B^2|\leq C \prod_{i=1 }^m\|f_iv_i\|_{p_i}\prod_{i\in \mathcal{I}_1} \left\|\frac{v_i^{-1}}{(|B_0|^{1/n}+|x_{B_0}-\cdot|)^{\theta_i}}\right\|_\infty\,\prod_{i\in \mathcal{I}_2}\left(\int_{\mathbb{R}^n}\frac{v_i^{-p_i'}}{(|B_0|^{1/n}+|x_{B_0}-y_i|)^{\theta_i p_i'}}\right)^{1/p_i'},\]
	where $\theta_i=n-\gamma_i+1/m$. So by using condition \eqref{eq: clase Hcal(p,gamma,delta) - m} we get that
	\[|a_B^2|\leq C\frac{w^{-1}(B_0)}{|B_0|}|B_0|^{(\delta-1)/n}\prod_{i=1 }^m\|f_iv_i\|_{p_i}<\infty.\]
	Let us now consider  $I_1\vec{f}$. By proceeding as in the corresponding estimate in \cite{BPR22} we obtain
	\begin{align*}
	\int_B|I_1\vec{f}(x)|\,dx&\leq C\prod_{i=1}^m\|f_iv_i\|_{p_i}\prod_{i\in \mathcal{I}_2}\left(\frac{1}{|\tilde B|}\int_{\tilde B}v_i^{-p_i'}\right)^{1/p_i'}\times\\
	&\quad\times \prod_{i\in \mathcal{I}_1}\left\| v_i^{-1}\mathcal{X}_{\tilde B}\right\|_\infty|\tilde B|^{(\gamma-\gamma_0)/n-m_1+1/q'+1-1/(m_0p^*)}\\
	&=C|\tilde B|^{\gamma/n-1/p+1}\prod_{i=1}^m\|f_iv_i\|_{p_i}\prod_{i\in \mathcal{I}_1}\left\| v_i^{-1}\mathcal{X}_{\tilde B}\right\|_\infty\,\prod_{i\in \mathcal{I}_2}\left(\frac{1}{|\tilde B|}\int_{\tilde B}v_i^{-p_i'}\right)^{1/p_i'}.
	\end{align*}
	
	We rearrange the indices in $\mathcal{I}_1$ increasingly, in a way to get $\mathcal{I}_1=\{i_1,\dots,i_{m_1}\}$. Observe that
	\[\prod_{i\in\mathcal{I}_1}\left\|v_i^{-1}\mathcal{X}_{\tilde B}\right\|_\infty\leq \prod_{i\in\mathcal{I}_1}\left(\left\|v_i^{-1}\mathcal{X}_{\tilde B-B}\right\|_\infty+\left\|v_i^{-1}\mathcal{X}_{B}\right\|_\infty\right)=\sum_{\sigma\in S^{m_1}}\prod_{j=1}^{m_1}\left\|v_{i_j}^{-1}\mathcal{X}_{\tilde B-B}\right\|_\infty^{\sigma_j}\left\|v_{i_j}^{-1}\mathcal{X}_{B}\right\|_\infty^{1-\sigma_j}.\]
	Therefore,
	\begin{align*}
\int_B |I_1\vec{f}(x)|\,dx&\leq C\left(\prod_{i=1}^m\|f_iv_i\|_{p_i}\right)\times\\
&\quad \times \sum_{\sigma\in S^{m_1}}|\tilde B|^{\gamma/n-1/p+1}\prod_{i\in \mathcal{I}_2}\left(\frac{1}{|\tilde B|}\int_{\tilde B}v_i^{-p_i'}\right)^{1/p_i'}\prod_{j=1}^{m_1}\left\|v_{i_j}^{-1}\mathcal{X}_{\tilde B-B}\right\|_\infty^{\sigma_j}\left\|v_{i_j}^{-1}\mathcal{X}_{B}\right\|_\infty^{1-\sigma_j}
	\end{align*}
	
	 Fix $\sigma\in S^{m_1}$ and define the sets
	\[\mathcal{I}=\{i_j\in\mathcal{I}_1: \sigma_j=1\} \quad\textrm{ and }\quad \mathcal{J}=\{i_j\in\mathcal{I}_1: \sigma_j=0\}.\]
	We can apply condition \eqref{eq: local 2 y global 2 mezcladas}  to bound every term of the sum by
	\[C\frac{w^{-1}(B)}{|B|^{1+(\gamma-\delta)/n-1/p}}|\tilde B|^{\gamma/n-1/p+1}=Cw^{-1}(B)|B|^{\delta/n}.\]
	Consequently,
	\[\int_B |I_1\vec{f}(x)|\,dx\leq Cw^{-1}(B)|B|^{\delta/n}\left(\prod_{i=1}^m\|f_iv_i\|_{p_i}\right).\]
	
	Finally, for $I_2\vec{f}$ we have
	\[|I_2\vec{f}(x)|\leq |B|^{1/n}\sum_{\sigma\in S_m,\sigma\neq \mathbf{1}} \int_{\mathbf{\tilde B}^\sigma} \frac{\prod_{i=1}^m|f_i(y_i)|}{(\sum_{i=1}^m|x_B-y_i|)^{mn-\gamma+1}}\,d\vec{y}.\]
	This expression is similar to $a_B^2$, with $B_0$ replaced by $\tilde B$. Observe that
	\[\left\|\frac{v_i^{-1}}{|x_B-\cdot|^{\theta_i}}\mathcal{X}_{\tilde B^c}\right\|_\infty\leq \left\|\frac{v_i^{-1}}{|x_B-\cdot|^{\theta_i}}\mathcal{X}_{B^c}\right\|_\infty\]
	for those indices $i\in\mathcal{I}_1$ such that $\sigma_i=0$. On the other hand, if $i\in\mathcal{I}_1$ and $\sigma_i=1$, we can split the expression $\|v_i^{-1}\mathcal{X}_{\tilde B}\|_\infty$ as follows
	\[\left\|v_i^{-1}\mathcal{X}_{\tilde B}\right\|_\infty\leq \left\|v_i^{-1}\mathcal{X}_{\tilde B-B}\right\|_\infty+\left\|v_i^{-1}\mathcal{X}_{B}\right\|_\infty\]
	and repeat the argument used in the estimation of $I_1\vec{f}$. After applying condition \eqref{eq: local 2 y global 2 mezcladas} we get that
	\[\int_B|I_2\vec{f}(x)|\,dx\leq Cw^{-1}(B)|B|^{\delta/n}\prod_{i=1}^m\|f_iv_i\|_{p_i}.\]
	This concludes the proof of the lemma.\qedhere
\end{proof}
\begin{obs}
The corresponding bound obtained for $I\vec{f}$ will be used for the proof of Theorem~\ref{teo: teo principal - Hcal}. 
\end{obs}

The next lemma was given in \cite{BPR22}. The sets involved in its statement are defined as follows. 

For a fixed ball $B=B(x_B,R)$ we set 
\[A=\{x_B+h: h=(h_1,h_2,\dots,h_n): h_i\geq 0 \textrm{ for }1\leq i\leq n\},\]
\[C_1=B\left(x_B-\frac{R}{12\sqrt{n}}u,\frac{R}{12\sqrt{n}}\right)\cap\left\{x_B-\frac{R}{12\sqrt{n}}u+h: h_i\leq 0 \textrm{ for every }i\right\},\]
and
\[C_2=B\left(x_B-\frac{R}{3\sqrt{n}}u,\frac{2R}{3}\right)\cap\left\{x_B-\frac{R}{3\sqrt{n}}u+h: h_i\leq 0 \textrm{ for every }i\right\},\]
where $u=(1,1,\dots,1)$. 

\begin{lema}\label{lema: diferencia de nucleos positiva}
	There exists a positive constant $C=C(n)$ such that the inequality
	\[\frac{1}{(\sum_{j=1}^m|x-y_j|)^{mn-\gamma}}-\frac{1}{(\sum_{j=1}^m|z-y_j|)^{mn-\gamma}}\geq C\frac{|B|^{1/n}}{(|B|^{1/n}+\sum_{j=1}^m|x_B-y_j|)^{mn-\gamma+1}}\]
	holds for every $x\in C_1$, $z\in C_2$, and $y_j\in A$ for $1\leq j\leq m$.
\end{lema}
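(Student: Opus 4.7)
The strategy is to apply the elementary inequality
\[
\frac{1}{a^{\alpha}}-\frac{1}{b^{\alpha}}\ge \alpha\,\frac{b-a}{b^{\alpha+1}},\qquad 0<a\le b,\ \alpha>0,
\]
with $\alpha=mn-\gamma>0$, $a=S_x:=\sum_{j=1}^m|x-y_j|$ and $b=S_z:=\sum_{j=1}^m|z-y_j|$. Thus, the lemma follows once we establish, with implicit constants depending only on $n$:
\begin{enumerate}[\rm(i)]
\item $S_z-S_x\gtrsim R=|B|^{1/n}$;
\item $S_z\lesssim |B|^{1/n}+\sum_{j=1}^m|x_B-y_j|$.
\end{enumerate}

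Translating so that $x_B=0$, the definitions of $A$, $C_1$ and $C_2$ enforce the coordinate-wise bounds
\[
(y_j)_i\ge 0,\qquad -\tfrac{R}{6\sqrt{n}}\le (x)_i\le -\tfrac{R}{12\sqrt{n}},\qquad (z)_i\le -\tfrac{R}{3\sqrt{n}},
\]
for every $1\le i\le n$ and $1\le j\le m$. Consequently $(x-z)_i\ge R/(6\sqrt{n})$ and $(y_j-x)_i\ge R/(12\sqrt{n})>0$, from which
\[
(y_j-z)_i=(y_j-x)_i+(x-z)_i\ \ge\ (y_j-x)_i+\tfrac{R}{6\sqrt{n}}>0.
\]
The crux of the argument is now that, both sides being positive, we may square coordinate-wise and add to obtain
\[
|y_j-z|^2\ \ge\ |y_j-x|^2+\tfrac{R}{3\sqrt{n}}\,\|y_j-x\|_1+\tfrac{R^2}{36}.
\]
Using $\|y_j-x\|_1\ge |y_j-x|$ and $n\ge 1$, this sharpens to $|y_j-z|^2\ge\bigl(|y_j-x|+R/(6\sqrt{n})\bigr)^{2}$; extracting square roots and summing over $j$ yields $(i)$ with explicit constant $mR/(6\sqrt{n})$.

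Statement $(ii)$ is routine: the triangle inequality, together with $|z-x_B|\le 2R/3+R/3=R$ (directly from the definition of $C_2$), gives $S_z\le mR+\sum_{j=1}^m|x_B-y_j|$. Plugging $(i)$ and $(ii)$ into the inequality for $a^{-\alpha}-b^{-\alpha}$ concludes the proof with a constant depending only on $n$. The main obstacle is the passage from the coordinate-wise comparison $(y_j-z)_i\ge (y_j-x)_i+R/(6\sqrt{n})$ to a \emph{linear in $R$} comparison of Euclidean norms: the reverse triangle inequality alone yields $\bigl||y_j-z|-|y_j-x|\bigr|\le|z-x|$, an upper bound which may in principle be much smaller than needed, so the sign structure enforced by $A$, $C_1$ and $C_2$ must really be exploited.
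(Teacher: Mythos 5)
Your proof is correct. The paper itself gives no argument for this lemma (it is stated as quoted from \cite{BPR22}), and your derivation --- the coordinate-wise separation forced by the sign conditions defining $A$, $C_1$, $C_2$, which yields $|z-y_j|\ge |x-y_j|+\tfrac{R}{6\sqrt{n}}$ for every $j$, combined with the elementary inequality $a^{-\alpha}-b^{-\alpha}\ge \alpha(b-a)b^{-\alpha-1}$ for $0<a\le b$, $\alpha=mn-\gamma>0$, and the crude bound $\sum_j|z-y_j|\le mR+\sum_j|x_B-y_j|$ --- is exactly the mechanism these sets were designed for, and every step (the bounds $-\tfrac{R}{6\sqrt n}\le x_i\le-\tfrac{R}{12\sqrt n}$, $z_i\le -\tfrac{R}{3\sqrt n}$, the coordinate-wise squaring, and the absorption of $\|y_j-x\|_1\ge|y_j-x|$, $\tfrac{R^2}{36}\ge\tfrac{R^2}{36n}$ into a perfect square) checks out. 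The only caveat is cosmetic: through $\alpha=mn-\gamma$ and the factor $mR$ your constant depends on $m$ and $\gamma$ as well as $n$, which is harmless since these are fixed parameters and matches the level of precision of the statement $C=C(n)$.
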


\begin{obs}\label{obs: medida de conjuntos C como B}
	It is not difficult to see that $|C_i|\approx |B|$, for $i=1,2$.
\end{obs}

\section{Proof of the main results}\label{section: prueba principal}

In this section we prove our main results.

\begin{proof}[Proof of Theorem~\ref{teo: teo principal - Hcal}]
	We shall first prove that $(\ref{item: teo principal - Hcal item 2})$ implies $(\ref{item: teo principal - Hcal item 1})$. We shall deal with the operator $J_{\gamma,m}$ since it differs from $I_{\gamma,m}$ by a constant term. We want to prove that for every ball $B$
	\begin{equation}\label{eq: teo principal - eq1}
	\frac{1}{w^{-1}(B)|B|^{\delta/n}}\int_B |J_{\gamma,m}\vec{f}(x)-(J_{\gamma,m}\vec{f})_B|\,dx\leq C\prod_{i=1}^m\|f_iv_i\|_{p_i},
	\end{equation}
	with $C$ independent of $B$. Fix a ball $B=B(x_B,R)$ and recall that $J_{\gamma,m}\vec{f}(x)=a_B+I\vec{f}(x)$. In  Lemma~\ref{lema: finitud de J_gamma,m para Hcal} we proved that
	\begin{equation*}
	\int_B|I\vec{f}(x)|\,dx\leq Cw^{-1}(B)|B|^{\delta/n}\prod_{i=1}^m\|f_iv_i\|_{p_i},
	\end{equation*}
	which implies that
	\begin{equation}\label{eq: teo principal - estimacion de If}
	\int_B|J_{\gamma,m}\vec{f}(x)-a_B|\,dx\leq Cw^{-1}(B)|B|^{\delta/n}\prod_{i=1}^m\|f_iv_i\|_{p_i}.
	\end{equation}
	On the other hand, observe that
	\begin{align*}
	\int_B |J_{\gamma,m}\vec f (x)-(J_{\gamma,m}\vec{f}\,)_B|\,dx&\leq \int_B|J_{\gamma,m}\vec{f}(x)-a_B|\,dx+\int_B|(J_{\gamma,m}\vec f\,)_B-a_B|\,dx\\
	&\leq \int_B|J_{\gamma,m}\vec{f}(x)-a_B|\,dx+\int_B\frac{1}{|B|}\int_B|J_{\gamma,m}\vec f(y)-a_B|\,dy\,dx\\
	&\leq 2\int_B|J_{\gamma,m}\vec{f}(x)-a_B|\,dx.
	\end{align*}
	By combining this estimate with \eqref{eq: teo principal - estimacion de If} we obtain the desired inequality.
	
	We now prove that $(\ref{item: teo principal - Hcal item 1})$ implies $(\ref{item: teo principal - Hcal item 2})$. Assume that the component functions $f_i$ of $\vec{f}$ are nonnegative. We have that \eqref{eq: teo principal - eq1} holds for every ball $B=B(x_B,R)$. Also observe that
	\[\frac{1}{|B|}\int_B|g(x)-g_B|\,dx\approx \frac{1}{|B|^2}\int_B\int_B|g(x)-g(z)|\,dx\,dz,\]
	and therefore the left hand side of \eqref{eq: teo principal - eq1} is equivalent to
	\[\frac{1}{w^{-1}(B)|B|^{1+\delta/n}}\int_B\int_B |J_{\gamma,m}\vec{f}(x)-J_{\gamma,m}\vec{f}(z)|\,dx\,dz=I.\]
	Observe that, when $y_i\in B$ for every $i$ we have  
	\[|B|^{1/n}+|x_B-y_j|\geq \frac{1}{m}\left(|B|^{1/n}+\sum_{i=1}^m|x_B-y_i|\right),\]
	for every $1\leq j\leq m$.
	By combining Lemma~\ref{lema: diferencia de nucleos positiva} and Remark~\ref{obs: medida de conjuntos C como B} with the inequality above we can estimate $I$ as follows
	\begin{align*}
	I&\geq \frac{1}{w^{-1}(B)|B|^{1+\delta/n}}\int_{C_2}\int_{C_1} \int_{A^m} \frac{|B|^{1/n}\prod_{i=1}^m f_i(y_i)}{(|B|^{1/n}+\sum_{i=1}^m|x_B-y_i|)^{mn-\gamma+1}}\,d\vec{y}\,dx\,dz\\
	&\geq C\frac{|B|^{1+(1-\delta)/n}}{w^{-1}(B)}\prod_{i=1}^m\left(\int_A \frac{f_i(y_i)}{(|B|^{1/n}+|x_B-y_i|)^{n-\gamma_i+1/m}}\,dy_i\right).
	\end{align*}
	Since the set $A$ is a quadrant from $x_B$, a similar estimation can be obtained for the other quadrants from $x_B$. Thus, we get
	\[I\geq C\frac{|B|^{1+(1-\delta)/n}}{w^{-1}(B)}\prod_{i=1}^m\left(\int_{\mathbb{R}^n} \frac{f_i(y)}{(|B|^{1/n}+|x_B-y|)^{n-\gamma_i+1/m}}\,dy\right),\]
	which implies that
	\begin{equation}\label{eq: teo principal - eq2}
	\frac{|B|^{1+(1-\delta)/n}}{w^{-1}(B)}\prod_{i=1}^m\left(\int_{\mathbb{R}^n} \frac{f_i(y)}{(|B|^{1/n}+|x_B-y|)^{n-\gamma_i+1/m}}\,dy\right)\leq C\prod_{i=1}^m\|f_iv_i\|_{p_i}.
	\end{equation}
	For every $i\in \mathcal{I}_1$ and $k\in\mathbb{N}$ we define $V_k^i=\{x: v_i^{-1}(x)\leq k\}$ and the functionals
	\[F_i^k(g)=\int_{\mathbb{R}^n}\frac{g(y)v_i^{-1}(y)\mathcal{X}_{V_k^i}(y)}{(|B|^{1/n}+|x_B-y|)^{n-\gamma_i+1/m}}\,dy.\] 
	Therefore $F_i^k$ is a functional in $(L^1)^*=L^{\infty}$. Indeed, if $g\in L^1$
	\[|F_i^k(g)|\leq \|g\|_{L^1} \left\|\frac{v_i^{-1}\mathcal{X}_{V_k^i}}{(|B|^{1/n}+|x_B-\cdot|)^{n-\gamma_i+1/m}}\right\|_\infty<\infty,\]
	and we also get
	\[\frac{|F_i^k(f_iv_i)|}{\|f_iv_i\|_{L^1}}\leq \left\|\frac{v_i^{-1}\mathcal{X}_{V_k^i}}{(|B|^{1/n}+|x_B-\cdot|)^{n-\gamma_i+1/m}}\right\|_\infty,\]
	for every $i\in\mathcal{I}_1$.
	
	If $i\in \mathcal{I}_2$ then we set $A_k=A\cap B(0,k)$ and consider 
	\[f_i^k (y)=\frac{v_i^{-p_i'}(y)}{(|B|^{1/n}+|x_B-y|)^{(n-\gamma_i+1/m)/(p_i-1)}}\mathcal{X}_{A_k}(y)\mathcal{X}_{V_k^i}(y).\]
	
	Let us choose $\vec f=(f_1,\dots,f_m)$, where $f_iv_i\in L^1$ for $p_i=1$ and $f_i=f_i^k$ for $p_i>1$, for fixed $k$. Therefore, the left hand side of \eqref{eq: teo principal - eq2} can be written as follows 
	\[\frac{|B|^{1+(1-\delta)/n}}{w^{-1}(B)}\prod_{i\in \mathcal{I}_1}F_i^k(f_iv_i)\prod_{i\in \mathcal{I}_2}\left(\int_{A_k\cap V_k^i} \frac{v_i^{-p_i'}(y)}{(|B|^{1/n}+|x_B-y|)^{(n-\gamma_i+1/m)p_i'}}\,dy\right)\]
	and it is bounded by
	\[ C\prod_{i\in\mathcal{I}_1}\|f_iv_i\|_{L^1}\prod_{i\in\mathcal{I}_2}\left(\int_{A_k\cap V_k^i} \frac{v_i^{-p_i'}(y)}{(|B|^{1/n}+|x_B-y|)^{(n-\gamma_i+1/m)p_i'}}\,dy\right)^{1/p_i}.\]
	This yields
	\[\frac{|B|^{1+(1-\delta)/n}}{w^{-1}(B)}\prod_{i\in\mathcal{I}_1}\frac{|F_i^k(f_iv_i)|}{\|f_iv_i\|_{L^{1}}}\prod_{i\in\mathcal{I}_2}\left(\int_{A_k\cap V_k^i} \frac{v_i^{-p_i'}(y)}{(|B|^{1/n}+|x_B-y|)^{(n-\gamma_i+1/m)p_i'}}\,dy\right)^{1/p_i'}\leq C,\]
	for every nonnegative $f_i$ such that $f_iv_i\in L^1$, $i\in\mathcal{I}_1$ and for every $k\in\mathbb{N}$. By taking the supremum over these $f_i$ we get
	\begin{align*}
	&\frac{|B|^{1+(1-\delta)/n}}{w^{-1}(B)}\prod_{i\in\mathcal{I}_1}\left\|\frac{v_i^{-1}}{(|B|^{1/n}+|x_B-\cdot|)^{n-\gamma_i+1/m}}\right\|_\infty\,\prod_{i\in\mathcal{I}_2}\left(\int \frac{v_i^{-p_i'}\mathcal{X}_{A_k\cap V_k^i}}{(|B|^{1/n}+|x_B-\cdot|)^{(n-\gamma_i+1/m)p_i'}}\right)^{\tfrac{1}{p_i'}}\\
	&\qquad\leq C.
	\end{align*}
	By taking limit for $k\to\infty$, the left hand side converges to
	\[\frac{|B|^{1+(1-\delta)/n}}{w^{-1}(B)}\prod_{i\in\mathcal{I}_1}\left\|\frac{v_i^{-1}}{(|B|^{1/n}+|x_B-\cdot|)^{n-\gamma_i+1/m}}\right\|_\infty\,\prod_{i\in\mathcal{I}_2}\left(\int_{\mathbb{R}^n} \frac{v_i^{-p_i'}(y)}{(|B|^{1/n}+|x_B-y|)^{(n-\gamma_i+1/m)p_i'}}\,dy\right)^{\tfrac{1}{p_i'}}\]
	which is precisely the condition $\mathcal{H}_m(\vec{p},\gamma,\delta)$. This completes the proof.\qedhere
\end{proof}
	
	\medskip
	
	\begin{proof}[Proof of Theorem~\ref{teo: no-ejemplos Hcal}]
		We begin with item~\eqref{item: teo no-ejemplos Hcal - item a}.  We shall first assume that 
		$\delta >1$.  If $(w,\vec{v}) \in \mathcal{H}_m(\vec{p},\gamma,\delta)$, we choose $B=B(x_B, R)$ where $x_B$ is a Lebesgue point of $w^{-1}$. From \eqref{eq: clase Hcal(p,gamma,delta) - m} we obtain 
		\[\prod_{i\in\mathcal{I}_1}\left\|\frac{v_i^{-1}}{(|B|^{1/n}+|x_B-\cdot|)^{n-\gamma_i+1/m}}\right\|_\infty\,\prod_{i\in\mathcal{I}_2}\left(\int_{\mathbb{R}^n}\frac{v_i^{-p_i'}}{(|B|^{1/n}+|x_B-\cdot|)^{(n-\gamma_i+1/m)p_i'}}\right)^{\tfrac{1}{p_i'}}
		\lesssim \frac{w^{-1}(B)}{|B|R^{1-\delta}},\]
		for every $R>0$. By letting $R\to 0$ and applying the monotone convergence theorem, we conclude that at least one limit factor in the product should be zero. That is, there exists $1\leq i \leq m$ such that $v_i = \infty$ almost everywhere. 
		
		On the other hand, if $\delta > \gamma - n/p$ and $(w,\vec{v})$ belongs to  $\mathcal{H}_m(\vec{p},\gamma,\delta)$, we pick a ball $B=~B(x_B, R)$, where $x_B$ is a Lebesgue point of $w^{-1}$ and every $v_i^{-1}$. Then, by applying \eqref{eq: condicion local 2}, we have
		\begin{align*}
		\prod_{i=1}^m \frac{1}{|B|}\int_B v_i^{-1}&\leq \prod_{i\in\mathcal{I}_1}\left\|v_i^{-1}\mathcal{X}_B\right\|_\infty\,\prod_{i\in\mathcal{I}_2}\left( \frac{1}{|B|}\int_B v_i^{-p'_i} \right)^{1/p'_i }\\
		&\leq C \frac{w^{-1}(B)}{|B|} R^{\delta - \gamma + n/p}
		\end{align*}	
		for every $R>0$. By letting $R\to 0$ we get
		\begin{equation*}
		\prod_{i=1 }^{m} v_{i}^{-1}(x_B)=0, 
		\end{equation*}	
		which yields that $\prod_{i=1 }^{m} v_{i}^{-1}$ is zero almost everywhere. This implies that $M=\bigcap_{i=1}^m \{v_i^{-1} >0 \}$ has null measure. Since $v_i(y)>0$ for almost every $y$ and every $i$, there exists $j$ such that $v_j = \infty$ almost everywhere.
		
		We turn now our attention to item \eqref{item: teo no-ejemplos Hcal - item b}, that is,  $\delta= \gamma - n/p =1$. We shall prove that if $(w,\vec{v})\in~\mathcal{H}_m(\vec{p}, \gamma, 1)$, there exists $j$ such that $v_j = \infty$ in almost $\mathbb{R}^n$. We define
		\begin{equation*}
		\frac{1}{\alpha} = \sum_{i=1 }^{m }\frac{1}{p'_i} = \frac{mp-1}{p}.
		\end{equation*}
		By applying Hölder inequality we obtain that
		\begin{equation*}
		\left(\int_{\mathbb{R}^n } \frac{(\prod_{i\in\mathcal{I}_2} v_i^{-1 })^{\alpha}}{(|B|^{1/n} + |x_B -y|)^{\sum_{i\in\mathcal{I}_2}(n-\gamma_i +1/m)\alpha}} \right)^{1/\alpha} 
		\leq C \prod_{i\in\mathcal{I}_2} 
		\left(
		\int_{\mathbb{R}^n } \frac{ v_i^{-p'_i }}{(|B|^{1/n} + |x_B -\cdot|)^{(n-\gamma_i + 1/m)p'_i} } \right)^{\tfrac{1}{p_i'}}
		\end{equation*}
		and since $(w,\vec{v})\in \mathcal{H}_{m}(\vec{p}, \gamma, 1)$ this implies that
		\[\prod_{i\in \mathcal{I}_1}\left\|\frac{v_i^{-1}}{(|B|^{1/n}+|x_B-\cdot|)^{n-\gamma_i+1/m}}\right\|_\infty\left(\int_{\mathbb{R}^n } \frac{(\prod_{i\in\mathcal{I}_2} v_i^{-1 })^{\alpha}}{(|B|^{1/n} + |x_B -y|)^{\sum_{i\in\mathcal{I}_2}(n-\gamma_i +1/m)\alpha }} \right)^{1/\alpha}\lesssim \frac{w^{-1}(B)}{|B|},\]
		and furthermore
		\[\left(\int_{\mathbb{R}^n } \frac{(\prod_{i=1}^m v_i^{-1 })^{\alpha}}{(|B|^{1/n} + |x_B -y|)^{(mn-\gamma +1)\alpha }}\right) ^{1/\alpha}\lesssim \frac{w^{-1}(B)}{|B|}\]
		for every ball $B=B(x_B,R)$.
		
		If we assume that the set $E=\{x: ~ \prod_{i=1 }^m v_i^{-1 }(x)>~0\}$ has positive measure, we arrive to a contradiction by following the same argument as in Theorem 1.2, item (b) from \cite{BPR22}. This yields  
		$|E|=0$, that is, $\prod_{i=1 }^m v_i^{-1} =0$ almost everywhere, from where we can deduce that there exists  an index $j$ satisfying $v_j = \infty$ almost everywhere.
	\end{proof}

\section{The class \texorpdfstring{$\mathcal{H}_m(\vec{p},\gamma,\delta)$}{$Hm(p,\gamma,\delta)$}}\label{seccion: ejemplos}

 We begin this section by exhibiting nontrivial pairs of weights satisfying condition $\mathcal{H}_m(\vec{p},\gamma,\delta)$. Concretely, we shall prove the following theorem.
 
\begin{teo}\label{teo: ejemplos para Hcal}
	Given $0<\gamma<mn$ there exists pairs of weights $(w,\vec{v})$ satisfying \eqref{eq: clase Hcal(p,gamma,delta) - m} for every $\vec{p}$ and $\delta$ such that $\delta\leq \min\{1,\gamma-n/p\}$, excluding the case $\delta=1$ when $\gamma-n/p=1$.
\end{teo}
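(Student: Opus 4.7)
The plan is to exhibit explicit nontrivial pairs $(w,\vec{v})$ built from power weights. I would propose
\[
w(x)=|x|^{-a}\qquad\text{and}\qquad v_i(x)=|x|^{-b_i},\qquad i=1,\dots,m,
\]
for real parameters $a$ and $b_1,\dots,b_m$ to be chosen in terms of $\vec{p}$, $\gamma$, $\delta$ and a fixed decomposition $\sum_{i=1}^m\gamma_i=\gamma$ with $0<\gamma_i<n$. All resulting quantities are invariant under dilations $B\mapsto \lambda B$, so it will suffice to control the left side of \eqref{eq: clase Hcal(p,gamma,delta) - m} as a function of the two scales $R=|B|^{1/n}$ and $|x_B|$.

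First, I would identify the admissible range of the parameters by insisting on the basic integrability conditions: $w^{-1}(x)=|x|^{a}$ must be locally integrable (so $a>-n$), and for each $i\in \mathcal{I}_2$ the integral
\[
I_i(B)=\int_{\mathbb{R}^n}\frac{|y|^{b_i p_i'}}{(R+|x_B-y|)^{(n-\gamma_i+1/m)p_i'}}\,dy
\]
must converge near the origin and at infinity, forcing $-n/p_i'<b_i<n-\gamma_i+1/m-n/p_i'$. For $i\in \mathcal{I}_1$ the analogous two-sided constraint comes from requiring the $L^\infty$-factor \eqref{eq: factor de H para p_i=1} to be finite and nonzero. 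This admissibility window is always nonempty because $n-\gamma_i+1/m>0$.

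Next, I would compute $I_i(B)$ and $w^{-1}(B)=\int_B |y|^{a}\,dy$ by the standard case split. When $|x_B|\leq 2R$, the substitution $y=Rz$ together with $R+|x_B-y|\approx R+|y|$ yields $I_i(B)\approx R^{n+(b_i-n+\gamma_i-1/m)p_i'}$ and $w^{-1}(B)\approx R^{n+a}$. When $|x_B|>2R$, splitting $\mathbb{R}^n$ into $\{|y|\leq|x_B|/2\}$, $\{|x_B|/2<|y|\leq 2|x_B|\}$ and $\{|y|>2|x_B|\}$, and using on the first two regions that the denominator is comparable to $|x_B|^{(n-\gamma_i+1/m)p_i'}$, yields $I_i(B)\approx R^{n/p_i'}\,|x_B|^{b_i-(n-\gamma_i+1/m)}$ together with $w^{-1}(B)\approx R^n|x_B|^{a}$. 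Substituting these into the left side of \eqref{eq: clase Hcal(p,gamma,delta) - m} reduces the condition in each regime to a monomial bound in $R$ and $|x_B|$.

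Finally, I would enforce that the resulting exponents of $R$ and $|x_B|$ vanish in both regimes. Using $\sum_{i=1}^m 1/p_i'=m-1/p$ and $\sum_{i=1}^m\gamma_i=\gamma$, the two balance equations read
\[
a=\gamma-\tfrac{n}{p}-\delta+\sum_{i=1}^m b_i\qquad\text{and}\qquad a=\delta-1+\sum_{i=1}^m b_i,
\]
which are compatible only when $\delta\leq 1$ and $\delta\leq\gamma-n/p$ are both consistent with the admissibility windows for $(b_1,\dots,b_m)$; a direct check shows that these windows intersect nontrivially exactly in the closed region $\delta\leq\min\{1,\gamma-n/p\}$, and the forbidden corner $\delta=\gamma-n/p=1$ is the only boundary point where the two balance equations coincide but force every $b_i$ to the endpoint of its window, producing a logarithmic divergence instead of a uniform bound. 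Once an interior choice of $(a,b_1,\dots,b_m)$ compatible with both balance equations is exhibited, a direct substitution verifies \eqref{eq: clase Hcal(p,gamma,delta) - m}. The main obstacle I foresee is the careful book-keeping across both regimes simultaneously, together with treating the boundary case $\delta=\min\{1,\gamma-n/p\}$, where the margin in the strict integrability conditions shrinks to zero and a slightly perturbed choice of the $\gamma_i$ may be needed to keep every $b_i$ strictly inside its window.
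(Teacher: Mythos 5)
Your construction runs into a genuine obstruction at its central step. You propose to ``enforce that the resulting exponents of $R$ and $|x_B|$ vanish in both regimes,'' which yields the two balance equations $a=\gamma-\tfrac np-\delta+\sum_{i=1}^m b_i$ and $a=\delta-1+\sum_{i=1}^m b_i$; subtracting them forces $2\delta=\gamma-\tfrac np+1$, i.e.\ a \emph{single} value of $\delta$, so this over-determined system cannot produce examples throughout the region $\delta\leq\min\{1,\gamma-n/p\}$, and the claim that the two equations are ``compatible exactly'' on that region is unsubstantiated. The correct mechanism is asymmetric: when $|x_B|\lesssim R$ the radius ranges over all of $(0,\infty)$, so the exponent of $R$ must vanish there (one equation, which fixes $a$ in terms of $\delta$ and the $b_i$'s), whereas in the regime $|x_B|>R$ one only needs the bound to come out as a \emph{nonnegative} power of $R/|x_B|\leq 1$ (with logarithms absorbed via $\log t\lesssim\varepsilon^{-1}t^{\varepsilon}$); the restrictions on $\delta$ enter as inequalities, not as a second equality. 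Moreover, your far-regime asymptotics $I_i(B)\approx R^{n/p_i'}|x_B|^{b_i-(n-\gamma_i+1/m)}$ are wrong in general: the dominant contribution depends on the sign of $(n-\gamma_i+1/m)p_i'-n$ (equivalently, whether $\gamma_i$ is smaller than, equal to, or larger than $n/p_i+1/m$), and in the borderline case a logarithmic factor appears. The paper's proof spends most of its effort on precisely this case analysis (its cases $\theta_i<0$, $\theta_i=0$, $\theta_i>0$), and no single monomial formula valid in both regimes exists; your ``direct substitution verifies'' step therefore does not go through as described.

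There is also a structural point worth noting: the paper does not build examples on the whole region from scratch. For $\gamma-mn\leq\delta\leq\min\{1,\gamma-n/p\}$ it invokes the examples already constructed for the smaller class $\mathbb{H}_m(\vec p,\gamma,\delta)$ in \cite{BPR22} together with the inclusion of Remark~\ref{obs: Hbb contenida en Hcal}, and only the remaining range $\delta<\gamma-mn$ is handled by a new power-weight construction, which is furthermore reduced to the global condition \eqref{eq: condicion global 2} via Lemma~\ref{lema: equivalencia con local y global} (this is why the exponents for the indices with $p_i=1$ are chosen so that $v_i^{-1}\in\mathrm{RH}_\infty$, a point your sketch leaves untreated). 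Your proposal also leaves the $p_i=1$ factors, the requirement that $w$ itself be a weight, and the boundary case $\delta=\min\{1,\gamma-n/p\}$ essentially unexamined, and the explanation of the excluded corner $\delta=\gamma-n/p=1$ rests on the flawed two-equation setup. Power weights are indeed the right family where new examples are needed, but as written the argument does not prove the theorem.
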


The following figure shows the area in which we can find nontrivial weights satisfying condition $\mathcal{H}_m(\vec{p},\gamma,\delta)$, split into the cases $\gamma<1, \gamma=1$ and $\gamma>1$.

\begin{center}
	\begin{tikzpicture}[scale=0.75]
	\node[above] at (-4,6) {$\gamma>1$};
	\draw [-stealth, thick] (-6,-7)--(-6,5);
	\draw [-stealth, thick] (-7,0)--(-1,0);
	\draw [thick] (-6.05,3)--(-5.95, 3);
	\node [left] at (-6,5) {$\delta$};
	\node [left] at (-6,3) {$1$};
	\node [below] at (-1,0) {$1/p$};
	\node [below] at (-2,0) {$m$};
	\draw [thick] (-2,0.05)--(-2,-0.05);
	\draw [thick] (-6.05,-4)--(-5.95, -4);
	\node [left] at (-6,-4) {$\gamma-mn$};
	\draw [fill=aquamarine, fill opacity=0.5] (-6,-7)--(-6,3)--(-4,3)--(-2,-4)--(-2,-7)--cycle;
	\draw [color=white] (-2,-7)--(-6,-7);
	\draw [dashed, thick] (-6,1)--(-3.4286,1);
	\node [left] at (-6,1) {$\tau$};
	\draw [fill=white] (-4,3) circle (0.08cm);
	\node [right] at (-3.5,2) {$\delta=\gamma-n/p$};
	\node[above] at (3,6) {$\gamma=1$};
	\draw [-stealth, thick] (1,-7)--(1,5);
	\draw [-stealth, thick] (0,0)--(6,0);
	\draw [thick] (0.95,3)--(1.05, 3);
	\node [left] at (1,5) {$\delta$};
	\node [left] at (1,3) {$1$};
	\node [below] at (6,0) {$1/p$};
	\node [below] at (5,0) {$m$};
	\draw [thick] (5,0.05)--(5,-0.05);
	\draw [thick] (0.95,-4)--(1.05, -4);
	\node [left] at (1,-4) {$\gamma-mn$};
	\draw [fill=aquamarine, fill opacity=0.5] (1,-7)--(1,3)--(5,-4)--(5,-7)--cycle;
	\draw [color=white] (5,-7)--(1,-7);
	\draw [dashed, thick] (1,1)--(2.152857,1);
	\node [left] at (1,1) {$\tau$};
	\draw [fill=white] (1,3) circle (0.08cm);
	\node [right] at (2,2) {$\delta=\gamma-n/p$};
	\node[above] at (10,6) {$\gamma<1$};
	\draw [-stealth, thick] (8,-7)--(8,5);
	\draw [-stealth, thick] (7,0)--(13,0);
	\draw [thick] (7.95,3)--(8.05, 3);
	\node [left] at (8,5) {$\delta$};
	\node [left] at (8,3) {$1$};
	\node [below] at (13,0) {$1/p$};
	\node [below] at (12,0) {$m$};
	\draw [thick] (12,0.05)--(12,-0.05);
	\draw [thick] (7.95,-4)--(8.05, -4);
	\node [left] at (8,-4) {$\gamma-mn$};
	\draw [fill=aquamarine, fill opacity=0.5] (8,-7)--(8,2)--(12,-4)--(12,-7)--cycle;
	\draw [color=white] (12,-7)--(8,-7);
	\draw [dashed, thick] (8,1)--(8.6667,1);
	\node [left] at (8,1) {$\tau$};
	\node [right] at (8.5,2) {$\delta=\gamma-n/p$};
	\end{tikzpicture}
\end{center}

The following lemma will be useful in order to prove Theorem~\ref{teo: ejemplos para Hcal} (see \cite{Pradolini01}).

\begin{lema}\label{lema: estimacion de la integral de |x|^a en una bola}
	If $R>0$, $B=B(x_B,R)$ is a ball in $\mathbb{R}^n$ and $\alpha>-n$ then
	\[\int_B |x|^{\alpha}\,dx\approx R^n\left(\max\{R,|x_B|\}\right)^\alpha.\]
\end{lema}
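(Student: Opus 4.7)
The plan is to split the argument into two regimes according to the relative size of $|x_B|$ and $R$, so that in each regime the quantity $\max\{R,|x_B|\}$ is just one of the two numbers.

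\textbf{Far regime: $|x_B|>2R$.} Here $\max\{R,|x_B|\}=|x_B|$. For every $x\in B(x_B,R)$ the triangle inequality gives
\[
\tfrac{1}{2}|x_B| < |x_B|-R \;\le\; |x| \;\le\; |x_B|+R < \tfrac{3}{2}|x_B|,
\]
so $|x|\approx |x_B|$ uniformly on $B$. Integrating, $\int_B|x|^\alpha\,dx\approx |x_B|^\alpha|B|\approx R^n|x_B|^\alpha$, which is the claim.

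\textbf{Near regime: $|x_B|\le 2R$.} Here $\max\{R,|x_B|\}\approx R$, so the target is $\int_B|x|^\alpha\,dx\approx R^{n+\alpha}$. For the upper bound, every $x\in B$ satisfies $|x|\le |x_B|+R\le 3R$, hence $B\subset B(0,3R)$; passing to polar coordinates, and using $\alpha+n>0$, gives
\[
\int_B |x|^\alpha\,dx \;\le\; \int_{B(0,3R)}|x|^\alpha\,dx \;=\; \frac{n\omega_n}{n+\alpha}(3R)^{n+\alpha}\;\lesssim\; R^{n+\alpha}.
\]

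For the lower bound I would treat the two signs of $\alpha$ separately. If $-n<\alpha<0$, then the upper estimate $|x|\le 3R$ together with $\alpha<0$ yields $|x|^\alpha\ge (3R)^\alpha$ on all of $B$, so $\int_B|x|^\alpha\,dx\ge (3R)^\alpha|B|\gtrsim R^{n+\alpha}$. If $\alpha\ge 0$, I integrate over $B\setminus B(0,R/4)$: its measure is $\ge |B|-|B(0,R/4)|=\omega_n R^n(1-4^{-n})\gtrsim R^n$, while on it $|x|^\alpha\ge (R/4)^\alpha$, giving $\int_B|x|^\alpha\,dx\gtrsim R^\alpha\cdot R^n=R^{n+\alpha}$.

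The main obstacle, such as it is, is only the lower bound in the near regime: since $|x|^\alpha$ is largest where $|x|$ is small (when $\alpha<0$) but where $|x|$ is comparable to $R$ (when $\alpha\ge 0$), one cannot avoid distinguishing the two signs. The other steps are routine triangle-inequality estimates and one polar-coordinate computation.
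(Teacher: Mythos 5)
Your proof is correct: the split into the far regime $|x_B|>2R$ (where $|x|\approx|x_B|$ on $B$) and the near regime $|x_B|\le 2R$ (upper bound by $B\subset B(0,3R)$ and polar coordinates using $\alpha+n>0$, lower bound by distinguishing the signs of $\alpha$) is complete and all the estimates check out. Note that the paper does not prove this lemma at all — it simply cites \cite{Pradolini01} — so there is nothing to compare against; your argument is the standard one and fills in exactly what the citation refers to.
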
 

\medskip

\begin{proof}[Proof of Theorem~\ref{teo: ejemplos para Hcal}]
	In \cite{BPR22} we exhibited examples of weights in the class $\mathbb{H}_m(\vec{p},\gamma,\delta)$ given by \eqref{eq: clase Hbb(p,gamma,delta) - m}, for $\gamma-mn\leq \delta\leq \min\{1,\gamma-n/p\}$, excluding the case $\delta=1$ when $\gamma-n/p=1$. By Remark~\ref{obs: Hbb contenida en Hcal} the same examples satisfy
	$\mathcal{H}_m(\vec{p},\gamma,\delta)$, so it will be enough  to check the case $\delta<\gamma-mn$.   
	
 Recall that $\theta_i=n/p_i+(1-\gamma)/m$ and $\mathcal{I}_1=\{1\leq i\leq m: p_i=1\}$. Let us first assume that $\mathcal{I}_1\neq \emptyset$. We choose $-\theta_i<\beta_i<n/p_i'$ for every $i\in\mathcal{I}_2$ and $\theta_i<0$, and $0<\beta_i<n/p_i'$ if $\theta_i\geq 0$. This election implies that 
\[\nu=\sum_{i\in\mathcal{I}_2,\theta_i\geq 0}\beta_i+\sum_{i\in\mathcal{I}_2,\theta_i<0}(\beta_i+\theta_i)>0.\]
We now choose  
\[0<\beta<\min\left\{\frac{\nu}{m_1}, n+\frac{1-\gamma}{m}\right\},\]
and take $\beta_i=-\beta$ for every $i\in\mathcal{I}_1$. Let $\alpha=\delta+\sum_{i=1}^m\beta_i+n/p-\gamma$ and define
\[w(x)=|x|^\alpha\quad\textrm{ and }\quad v_i(x)=|x|^{\beta_i},\quad \textrm{ for } 1\leq i\leq m.\]
 Notice that 
 \[\alpha=\delta+\sum_{i=1}^m\beta_i+n/p-\gamma<\delta+\sum_{i=1}^m\frac{n}{p_i'}+\frac{n}{p}-\gamma=\delta+mn-\gamma<0,\]
 since $\delta<\gamma-mn$, so $w^{-1}$ is a locally integrable function. On the other hand, since $v_i^{-1}\in \mathrm{RH}_\infty$  for  $i\in\mathcal{I}_1$ the same conclusion holds for these weights. For $i\in\mathcal{I}_2$ we also have that $v_i^{-p_i'}$ is locally integrable since
 $\beta_i<n/p_i'$. Therefore, by virtue of Lemma~\ref{lema: equivalencia con local y global}, it will be enough to show that there exists a positive constant $C$ such that the inequality
\begin{equation}\label{eq: teo: ejemplos para Hcal - eq1}
\frac{|B|^{1+(1-\delta)/n}}{w^{-1}(B)}\prod_{i\in\mathcal{I}_1}\left\|\frac{v_i^{-1}\mathcal{X}_{\mathbb{R}^n\backslash B}}{|x_B-\cdot|^{n-\gamma/m+1/m}}\right\|_\infty\,\prod_{i\in\mathcal{I}_2}\left(\int_{\mathbb{R}^n\backslash B}\frac{v_i^{-p_i'}}{|x_B-\cdot|^{(n-\gamma/m+1/m)p_i'}}\right)^{1/p_i'}\leq C
\end{equation}
holds for every ball $B=B(x_B,R)$. We shall first assume that $|x_B|\leq R$. By Lemma~\ref{lema: estimacion de la integral de |x|^a en una bola} we have that
\begin{equation}\label{eq: teo: ejemplos para Hcal - eq2}
\frac{|B|^{1+(1-\delta)/n}}{w^{-1}(B)}\lesssim R^{1-\delta+\alpha}.
\end{equation}
On the other hand, if $i\in\mathcal{I}_1$ and $B_k=B(x_B,2^kR)$, $k\in\mathbb{N}$, we have
\begin{align*}
\left\|\frac{v_i^{-1}\mathcal{X}_{\mathbb{R}^n\backslash B}}{|x_B-\cdot|^{n-\gamma/m+1/m}}\right\|_\infty&\lesssim \sum_{k=0}^\infty \left\|\frac{v_i^{-1}\mathcal{X}_{B_{k+1}\backslash B_k}}{|x_B-\cdot|^{n-\gamma/m+1/m}}\right\|_\infty\\
&\lesssim \sum_{k=0}^\infty \left(2^kR\right)^{-\beta_i-n+\gamma/m-1/m}\\
&\lesssim   R^{-\beta_i-n+\gamma/m-1/m},
\end{align*}
since $-\beta_i-n+\gamma/m-1/m<0$. This yields
\begin{equation}\label{eq: teo: ejemplos para Hcal - eq3}
\prod_{i\in\mathcal{I}_1}\left\|\frac{v_i^{-1}\mathcal{X}_{\mathbb{R}^n\backslash B}}{|x_B-\cdot|^{n-\gamma/m+1/m}}\right\|_\infty\lesssim R^{-\sum_{i\in\mathcal{I}_1}(\beta_i+\theta_i)}.
\end{equation}
Finally, since $\beta_i+\theta_i>0$ for $i\in\mathcal{I}_2$, by Lemma~\ref{lema: estimacion de la integral de |x|^a en una bola} we obtain 
\begin{align*}
	\left(\int_{\mathbb{R}^n\backslash B} \frac{v_i^{-p_i'}(y)}{|x_B-y|^{(n-\gamma/m+1/m)p_i'}}\,dy\right)^{1/p_i'}&\lesssim \sum_{k=0}^\infty (2^kR)^{-n+\gamma/m-1/m}\left(\int_{B_{k+1}\backslash B_k} |y|^{-\beta_ip_i'}\,dy\right)^{1/p_i'}\\
	&\lesssim \sum_{k=0}^\infty (2^kR)^{-n+\gamma/m-1/m-\beta_i+n/p_i'}\\
	&\lesssim R^{-n/p_i+\gamma/m-1/m-\beta_i},
\end{align*}
since $-n/p_i+\gamma/m-1/m-\beta_i<0$ by the choice of $\beta_i$. Therefore, we obtain
\begin{equation}\label{eq: teo: ejemplos para Hcal - eq4}
\prod_{i\in\mathcal{I}_2}\left(\int_{\mathbb{R}^n\backslash B}\frac{v_i^{-p_i'}(y)}{|x_B-y|^{(n-\gamma/m+1/m)p_i'}}\,dy\right)^{1/p_i'}\lesssim R^{-\sum_{i\in\mathcal{I}_2}(\beta_i+\theta_i)}.
\end{equation}
By combining \eqref{eq: teo: ejemplos para Hcal - eq2}, \eqref{eq: teo: ejemplos para Hcal - eq3} and \eqref{eq: teo: ejemplos para Hcal - eq4}, the left-hand side of \eqref{eq: teo: ejemplos para Hcal - eq1} is bounded by
\[CR^{1-\delta+\alpha-\sum_{i=1}^m(\theta_i+\beta_i)}=C.\] 
Now we consider the case $|x_B|>R$. By Lemma~\ref{lema: estimacion de la integral de |x|^a en una bola} we have that
\begin{equation}\label{eq: teo: ejemplos para Hcal - eq5}
\frac{|B|^{1+(1-\delta)/n}}{w^{-1}(B)}\lesssim R^{1-\delta}|x_B|^\alpha\lesssim R^{1-\delta+\alpha},
\end{equation}
because $\alpha<0$. Since $|x_B|>R$, there exists a number $N\in\mathbb{N}$ such that $2^NR<|x_B|\leq 2^{N+1}R$. For $i\in\mathcal{I}_1$ we write
\begin{align*}
\left\|\frac{v_i^{-1}\mathcal{X}_{\mathbb{R}^n\backslash B}}{|x_B-\cdot|^{n-\gamma/m+1/m}}\right\|_\infty&\lesssim \sum_{k=0}^N \left\|\frac{v_i^{-1}\mathcal{X}_{B_{k+1}\backslash B_k}}{|x_B-\cdot|^{n-\gamma/m+1/m}}\right\|_\infty+\sum_{k=N+1}^\infty \left\|\frac{v_i^{-1}\mathcal{X}_{B_{k+1}\backslash B_k}}{|x_B-\cdot|^{n-\gamma/m+1/m}}\right\|_\infty\\
&=S_1^i+S_2^i.
\end{align*}
By standard estimation we have that
\[S_1^i\lesssim |x_B|^{-\beta_i}\sum_{k=0}^N \left(2^kR\right)^{-n+\gamma/m-1/m}\lesssim |x_B|^{-\beta_i}R^{-n+\gamma/m-1/m}=|x_B|^{-\beta_i}R^{-\theta_i}\]
and
\begin{align*}
S_2^i&\lesssim \sum_{k=N+1}^\infty \left(2^kR\right)^{-\beta_i-n+\gamma/m-1/m}\\
&\lesssim \left(2^{N}R\right)^{-\beta_i-n+\gamma/m-1/m}\sum_{k=0}^\infty 2^{k(-\beta_i-n+\gamma/m-1/m)}\\
&\lesssim |x_B|^{-\beta_i}R^{-n+\gamma/m-1/m}
=|x_B|^{-\beta_i}R^{-\theta_i}.
\end{align*}
These inequalities imply that
\begin{equation}\label{eq: teo: ejemplos para Hcal - eq6}
\prod_{i\in\mathcal{I}_1}\left\|\frac{v_i^{-1}\mathcal{X}_{\mathbb{R}^n\backslash B}}{|x_B-\cdot|^{n-\gamma/m+1/m}}\right\|_\infty\lesssim |x_B|^{-\sum_{i\in\mathcal{I}_1}\beta_i}\,\,R^{-\sum_{i\in\mathcal{I}_1}\theta_i}.
\end{equation}

If $i\in\mathcal{I}_2$ we split the integral in a similar way to get
\begin{align*}
	\left(\int_{\mathbb{R}^n\backslash B} \frac{v_i^{-p_i'}(y)}{|x_B-y|^{(n-\gamma/m+1/m)p_i'}}\,dy\right)^{1/p_i'}&\lesssim \sum_{k=0}^\infty(2^{k}R)^{-n+\gamma/m-1/m}\left(\int_{B_k} |y|^{-\beta_ip_i'}\,dy\right)^{1/p_i'}\\
	&=\sum_{k=0}^N+\sum_{k=N+1}^\infty\\
	&=S_1^i+S_2^i.
\end{align*}\label{pag: estimacion del producto para i fuera de I_1, |x_B|>R}
 We shall estimate the sum $S_1^i+S_2^i$ by distinguishing into the cases $\theta_i<0$, $\theta_i=0$ and $\theta_i>0$. Let us first assume that $\theta_i<0$. Then by Lemma~\ref{lema: estimacion de la integral de |x|^a en una bola} we obtain
\begin{align*}
	S_1^i&\lesssim \sum_{k=0}^N(2^{k}R)^{-n+\gamma/m-1/m+n/p_i'}|x_B|^{-\beta_i}\\
	&\lesssim |x_B|^{-\beta_i}R^{-\theta_i}\sum_{k=0}^N 2^{-k\theta_i}\\
	&\lesssim |x_B|^{-\beta_i}(2^NR)^{-\theta_i}\\
	&\lesssim |x_B|^{-\beta_i-\theta_i},
\end{align*}
since $\theta_i<0$. For $S_2^i$ we apply again Lemma~\ref{lema: estimacion de la integral de |x|^a en una bola} in order to get
\begin{align*}
	S_2^i&\lesssim \sum_{k=N+1}^\infty(2^{k}R)^{-n+\gamma/m-1/m+n/p_i'-\beta_i}\\
	&\lesssim \sum_{k=N+1}^\infty \left(2^{k}R\right)^{-\beta_i-\theta_i}\\
	&= \left(2^{N+1}R\right)^{-\beta_i-\theta_i}\sum_{k=0}^\infty 2^{-k(\beta_i+\theta_i)}\\
	&\lesssim |x_B|^{-\beta_i-\theta_i},
\end{align*}
since $\theta_i+\beta_i>0$. This yields
\begin{equation}\label{eq: teo: ejemplos para Hcal - eq7}
	S_1^i+S_2^i\lesssim |x_B|^{-\beta_i-\theta_i}
\end{equation}
when $\theta_i<0$.

Now assume that $\theta_i=0$. By proceeding similarly as in the previous case, we have
\[S_1^i\lesssim |x_B|^{-\beta_i}N\lesssim |x_B|^{-\beta_i}\log_2\left(\frac{|x_B|}{R}\right),\]
and
\[S_2^i\lesssim |x_B|^{-\beta_i}\]
since $\beta_i>0$ when $\theta_i=0$. Consequently,
\begin{equation}\label{eq: teo: ejemplos para Hcal - eq8}
	S_1^i+S_2^i\lesssim |x_B|^{-\beta_i}\left(1+\log_2\left(\frac{|x_B|}{R}\right)\right)\lesssim |x_B|^{-\beta_i}\log_2\left(\frac{|x_B|}{R}\right).
\end{equation}

We finally consider the case $\theta_i>0$. For $S_2^i$ we can proceed exactly as in the case $\theta_i<0$ and get the same bound. On the other hand, for $S_1^i$ we have that
\begin{align*}
	S_1^i&\lesssim \sum_{k=0}^N(2^{k}R)^{-n+\gamma/m-1/m+n/p_i'}|x_B|^{-\beta_i}\\
	&\lesssim |x_B|^{-\beta_i}R^{-\theta_i}\sum_{k=0}^N 2^{-k\theta_i}\\
	&\lesssim |x_B|^{-\beta_i}\left(2^NR\right)^{-\theta_i}2^{N\theta_i}\\
	&\lesssim |x_B|^{-\beta_i-\theta_i}2^{N\theta_i}.
\end{align*}\label{pag: estimacion de S_1^i y S_2^i,  theta_i>0}
Therefore, if $i\in\mathcal{I}_2$ and $\theta_i>0$ we get
\begin{equation}\label{eq: teo: ejemplos para Hcal - eq9}
	S_1^i+S_2^i\lesssim |x_B|^{-\beta_i-\theta_i}\left(1+2^{N\theta_i}\right)\lesssim 2^{N\theta_i}|x_B|^{-\beta_i-\theta_i}. 
\end{equation}
By combining \eqref{eq: teo: ejemplos para Hcal - eq7},\eqref{eq: teo: ejemplos para Hcal - eq8} and \eqref{eq: teo: ejemplos para Hcal - eq9}  we obtain
\begin{align*}
	\prod_{i\in\mathcal{I}_2}\left(\int_{\mathbb{R}^n\backslash B} \frac{v_i^{-p_i'}(y)}{|x_B-y|^{(n-\gamma/m+1/m)p_i'}}\,dy\right)^{1/p_i'}&\lesssim \prod_{i\in\mathcal{I}_2, \theta_i<0} |x_B|^{-\beta_i-\theta_i} \prod_{i\in\mathcal{I}_2, \theta_i=0} |x_B|^{-\beta_i}\log_2\left(\frac{|x_B|}{R}\right) \\
	&\qquad\times\prod_{i\in\mathcal{I}_2, \theta_i>0} |x_B|^{-\beta_i-\theta_i}2^{N\theta_i} \\
	&\lesssim |x_B|^{-\sum_{i\in\mathcal{I}_2}(\beta_i+\theta_i)}2^{N\sum_{i\in\mathcal{I}_2,\theta_i> 0}\theta_i}\\
	&\qquad \times\left(\log_2\left(\frac{|x_B|}{R}\right)\right)^{\#\{i\in\mathcal{I}_2, \theta_i=0\}}.
\end{align*}

The estimate above combined with \eqref{eq: teo: ejemplos para Hcal - eq5} and \eqref{eq: teo: ejemplos para Hcal - eq6} allows us to bound the left-hand side of \eqref{eq: teo: ejemplos para Hcal - eq1} by
\[CR^{1-\delta+\alpha} |x_B|^{-\sum_{i\in\mathcal{I}_1}\beta_i}R^{-\sum_{i\in\mathcal{I}_1}\theta_i}|x_B|^{-\sum_{i\in\mathcal{I}_2}(\beta_i+\theta_i)}2^{N\sum_{i\in\mathcal{I}_2,\theta_i> 0}\theta_i}
\left(\log_2\left(\frac{|x_B|}{R}\right)\right)^{\#\{i\in\mathcal{I}_2, \theta_i=0\}}\]
or equivalently by
\begin{equation}\label{eq: teo: ejemplos para Hcal - eq10}
\left(\frac{R}{|x_B|}\right)^{1-\delta+\alpha-\sum_{i\in\mathcal{I}_1}\theta_i-\sum_{i\in\mathcal{I}_2,\theta_i> 0}\theta_i}\left(\log_2\left(\frac{|x_B|}{R}\right)\right)^{\#\{i\in\mathcal{I}_2, \theta_i=0\}}.	
\end{equation}
Notice that the exponent of $R/|x_B|$ is equal to 
\[\sum_{i\in\mathcal{I}_2,\theta_i<0}(\beta_i+\theta_i)+\sum_{i\in\mathcal{I}_1}\beta_i+\sum_{i\in\mathcal{I}_2,\theta_i\geq 0}\beta_i=\nu-m_1\beta>0,\]
from our election of $\beta$. Since $\log t\lesssim \varepsilon^{-1}t^\varepsilon$ for every $t\geq 1$ and every $\varepsilon>0$, we can bound \eqref{eq: teo: ejemplos para Hcal - eq10} by
\[C\left(\frac{R}{|x_B|}\right)^{\nu-m_1\beta-\varepsilon\#\{i\in\mathcal{I}_2, \theta_i=0\}},\]
and this exponent is positive provided we choose $\varepsilon>0$ sufficiently small. The proof is complete when $\mathcal{I}_1\neq\emptyset$. Otherwise, we can follow the same steps and define the same parameters, omitting the factor corresponding to $\mathcal{I}_1$. This concludes the proof. 
\end{proof}

We finish with the proof of the theorem dealing with the case $w=\prod_{i=1}^m v_i$.

\begin{proof}[Proof of Theorem~\ref{teo: caso de pesos iguales}]
Let $\alpha=p/(mp-1)$ and assume that $\alpha>1$. If $\vec{v}\in\mathcal{H}_m(\vec{p},\gamma,\delta)$, then by condition \eqref{eq: condicion local 2} we get 
\begin{equation}\label{eq: teo: caso de pesos iguales - eq1}
|B|^{-\delta/n+\gamma/n-1/p}\prod_{i\in\mathcal{I}_1}\|v_i^{-1}\mathcal{X}_B\|_\infty\,\prod_{i\in\mathcal{I}_2}\left(\frac{1}{|B|}\int_B v_i^{-p_i'}\right)^{1/p_i'}\leq \frac{C}{|B|}\int_B \prod_{i=1}^m v_i^{-1}.
\end{equation}
Notice that $\sum_{i=1}^m \alpha/p_i'=1$. Therefore we apply Hölder inequality with $p_i'/\alpha$ in order to obtain 
\[\left(\frac{1}{|B|}\int_B \left(\prod_{i=1}^m v_i^{-1}\right)^\alpha\right)^{1/\alpha}\leq \prod_{i\in\mathcal{I}_1}\|v_i^{-1}\mathcal{X}_B\|_\infty\,\prod_{i\in\mathcal{I}_2}\left(\frac{1}{|B|}\int_B v_i^{-p_i'}\right)^{1/p_i'}.\]
By multiplying each side of the inequality above by $|B|^{-\delta/n+\gamma/n-1/p}$ and using \eqref{eq: teo: caso de pesos iguales - eq1} we get
\[|B|^{-\delta/n+\gamma/n-1/p}\left(\frac{1}{|B|}\int_B \left(\prod_{i=1}^m v_i^{-1}\right)^\alpha\right)^{1/\alpha}\leq \frac{C}{|B|}\int_B \prod_{i=1}^m v_i^{-1}.\]
From this estimate we can conclude that
\[|B|^{-\delta/n+\gamma/n-1/p}\leq C\]
for every ball $B$, since $\alpha>1$. Then we must have that $\delta/n=\gamma/n-1/p$.	
\end{proof}

		
\def\cprime{$'$}
\providecommand{\bysame}{\leavevmode\hbox to3em{\hrulefill}\thinspace}
\providecommand{\MR}{\relax\ifhmode\unskip\space\fi MR }
\providecommand{\MRhref}[2]{%
	\href{http://www.ams.org/mathscinet-getitem?mr=#1}{#2}
}
\providecommand{\href}[2]{#2}


\begin{thebibliography}{10}
	
	\bibitem{AHIV}
	H.~Aimar, S.~Hartzstein, B.~Iaffei, and B.~Viviani, \emph{The {R}iesz potential
		as a multilinear operator into general {$\rm BMO_\beta$} spaces}, vol. 173,
	2011, Problems in mathematical analysis. No. 55, pp.~643--655.
	
	\bibitem{BPR22}
	Fabio Berra, Gladis Pradolini, and Wilfredo Ramos, \emph{Optimal parameters
		related with continuity properties of the multilinear fractional integral
		operator between lebesgue and lipschitz spaces}, Available at
	\url{https://arxiv.org/abs/2203.04247}.
	
	\bibitem{Grafakos92}
	Loukas Grafakos, \emph{On multilinear fractional integrals}, Studia Math.
	\textbf{102} (1992), no.~1, 49--56.
	
	\bibitem{GK01}
	Loukas Grafakos and Nigel Kalton, \emph{Some remarks on multilinear maps and
		interpolation}, Math. Ann. \textbf{319} (2001), no.~1, 151--180.
	
	\bibitem{HSV}
	E.~Harboure, O.~Salinas, and B.~Viviani, \emph{Boundedness of the fractional
		integral on weighted {L}ebesgue and {L}ipschitz spaces}, Trans. Amer. Math.
	Soc. \textbf{349} (1997), no.~1, 235--255.
	
	\bibitem{KS99}
	Carlos~E. Kenig and Elias~M. Stein, \emph{Multilinear estimates and fractional
		integration}, Math. Res. Lett. \textbf{6} (1999), no.~1, 1--15.
	
	\bibitem{Moen09}
	Kabe Moen, \emph{Weighted inequalities for multilinear fractional integral
		operators}, Collect. Math. \textbf{60} (2009), no.~2, 213--238.
	
	\bibitem{Muck72}
	B.~Muckenhoupt, \emph{Weighted norm inequalities for the {H}ardy maximal
		function}, Trans. Amer. Math. Soc. \textbf{165} (1972), 207--226.
	
	\bibitem{Muckenhoupt-Wheeden74}
	B.~Muckenhoupt and R.~Wheeden, \emph{Weighted norm inequalities for fractional
		integrals}, Trans. Amer. Math. Soc. \textbf{192} (1974), 261--274.
	
	\bibitem{MW-75-76}
	Benjamin Muckenhoupt and Richard~L. Wheeden, \emph{Weighted bounded mean
		oscillation and the {H}ilbert transform}, Studia Math. \textbf{54} (1975/76),
	no.~3, 221--237.
	
	\bibitem{Prado01cal}
	Gladis Pradolini, \emph{A class of pairs of weights related to the boundedness
		of the fractional integral operator between {$L^p$} and {L}ipschitz spaces},
	Comment. Math. Univ. Carolin. \textbf{42} (2001), no.~1, 133--152.
	
	\bibitem{Pradolini01}
	\bysame, \emph{Two-weighted norm inequalities for the fractional integral
		operator between {$L^p$} and {L}ipschitz spaces}, Comment. Math. (Prace Mat.)
	\textbf{41} (2001), 147--169.
	
	\bibitem{Pradolini10}
	\bysame, \emph{Weighted inequalities and pointwise estimates for the
		multilinear fractional integral and maximal operators}, J. Math. Anal. Appl.
	\textbf{367} (2010), no.~2, 640--656.
	
	\bibitem{PR}
	Gladis Pradolini and Jorgelina Recchi, \emph{On optimal parameters involved
		with two-weighted estimates of commutators of singular and fractional
		integral operators}, Available at \url{https://arxiv.org/abs/1911.08573}.
	
\end{thebibliography}
\end{document}